\numberwithin{equation}{section}
\newtheorem{Theorem}{Theorem}
\newtheorem{Proposition}{Proposition}
\newtheorem{Lemma}{Lemma}
\newtheorem{Remark}{Remark}
\newcommand{\R}{\mathbb{R}}
\newcommand{\Z}{\mathbb{Z}}
\newcommand{\N}{\mathbb{N}}
\newcommand{\dd}{\mathrm{d}}
\newcommand{\ed}{\mathrm{e}}
\newcommand{\supp}{\mathrm{supp}}
\newcommand{\Const}{\mathrm{C}}
\newcommand{\const}{c}
\newcommand{\lgs}[1]
{\langle\hspace{-0.148cm}\langle\hspace{-0.148cm} \langle\;  #1 \; \rangle\hspace{-0.148cm}\rangle\hspace{-0.148cm} \rangle}
\newcommand{\blgs}[1]
{\big\langle\hspace{-0.165cm}\big\langle\hspace{-0.165cm}\big\langle\;  #1\; \big\rangle\hspace{-0.165cm}\big\rangle\hspace{-0.165cm}\big\rangle}
\newcommand{\Blgs}[1]
{\Big\langle\hspace{-0.192cm}\Big\langle\hspace{-0.192cm}\Big\langle \; #1\; \Big\rangle\hspace{-0.192cm}\Big\rangle\hspace{-0.192cm}\Big\rangle}
\newcommand{\BBlgs}[1]
{\bigg\langle\hspace{-0.24cm}\bigg\langle\hspace{-0.24cm}\bigg\langle \; #1\; \bigg\rangle\hspace{-0.24cm}\bigg\rangle\hspace{-0.24cm}\bigg\rangle}
\newcommand{\ZZ}{\mathbb{Z}}                                             
\newcommand{\RR}{\mathbb{R}}
\renewcommand{\le}{\leq}
\renewcommand{\ge}{\geq}
\newcommand{\benum}{\begin{enumerate}}
\newcommand{\eenum}{\end{enumerate}}
\newcommand{\bitem}{\begin{itemize}}
\newcommand{\eitem}{\end{itemize}}
\newcommand{\barray}{\begin{array}}
\newcommand{\earray}{\end{array}}
\newcommand{\vertiii}[1]{{\left\vert\kern-0.25ex\left\vert\kern-0.25ex\left\vert #1 
 \right\vert\kern-0.25ex\right\vert\kern-0.25ex\right\vert}}
\begin{document}

\title{Hydrodynamic limit for a disordered harmonic chain}

\author{C\'edric Bernardin}
\address{C\'edric Bernardin\\
Laboratoire J.A. Dieudonn\'e
UMR CNRS 7351\\
Universit\'e de Nice Sophia-Antipolis,
Parc Valrose\\
06108 NICE Cedex 02, France}
\email{{\tt cbernard@unice.fr}}

\author{Fran\c{c}ois Huveneers}
\address{Fran\c{c}ois Huveneers\\
CEREMADE\\
UMR-CNRS 7534\\
Universit\'e de Paris Dauphine, PSL Research University\\
Place du Mar\'echal De Lattre De Tassigny\\
75775 Paris Cedex 16, France}
\email{{\tt huveneers@ceremade.dauphine.fr}}

\author{Stefano Olla}
\address{Stefano Olla\\
CEREMADE\\
UMR-CNRS 7534\\
Universit\'e de Paris Dauphine, PSL Research University\\
Place du Mar\'echal De Lattre De Tassigny\\
75775 Paris Cedex 16, France}
\email{{\tt olla@ceremade.dauphine.fr}}

\date{\today}

\thanks{Work partially supported by the grants ANR-15-CE40-0020-01 LSD and ANR-14-CE25-0011 EDNHS of the French National Research Agency (ANR)}

\maketitle

\begin{abstract}
We consider a one-dimensional unpinned chain of harmonic oscillators with random masses.
We prove that after hyperbolic scaling of space and time the distributions of the elongation, momentum and energy
 converge to the solution of the Euler equations.  
Anderson localization decouples the mechanical modes from the thermal modes, allowing the closure of the energy conservation equation even out of thermal equilibrium.
This example shows that the derivation of Euler equations rests primarily on scales separation and not on ergodicity. 
{Furthermore it follows from our proof that the temperature profile does not evolve in any space-time scale.} 
\end{abstract}

\section{Introduction}\label{sec:introduction}
In this paper, we consider a one-dimensional chain of coupled harmonic oscillators with masses $m_x$ and Hamiltonian 
$$
	H \; = \; \sum_{x\in \Z} \left( \frac{p_x^2}{2 m_x} + g\frac{(q_{x+1} - q_{x})^2}{2} \right) .
$$
By changing units, one can assume that the stiffness coefficient $g$ is equal to $1$.
The dynamics is governed by Hamilton's equations: 
$$
m_x \dot q_x =p_x, \qquad \dot p_x = (\Delta q)_x,
$$ 
where we have used the notation $\Delta = \nabla_- \nabla_+ = \nabla_+ \nabla_-$ for the discrete Laplacian, 
with $(\nabla_+ f)_x = f_{x+1} - f_{x}$ and $(\nabla_- f)_x = f_x - f_{x-1}$. 
{For the sake of simplicity, we consider for now the system on the infinite lattice $\Z$. 
This is by no means necessary, and starting from the next section, we will restrict ourselves to a finite box and take the thermodynamic limit properly.}

This system was first analyzed in finite volume when all masses $m_x$ are equal. 
Putting the chain in a non-equilibrium stationary state (NESS) between two heat reservoirs at different temperatures, 
it was found in \cite{rieder} that the energy current does not decay with the size of the system, indicating that energy propagates ballistically.
The situation changes if the masses are taken to be i.i.d.\@ random variables. 
This case was first investigated in \cite{rubin_greer,casher_lebowitz} and subsequently studied in \cite{verheggen,dhar,ajanki_huveneers}. 
As it turns out, the disordered harmonic chain is an Anderson insulator in disguise \cite{anderson}. 
However, as a consequence of the conservation of momentum, 
the ground state of the operator $M^{-1}\Delta$, featuring in Newton's equation $\ddot q = M^{-1}\Delta q$ with $M$ the diagonal matrix of the masses,
is a ``symmetry protected mode'' \cite{halperin}, implying a divergent localization length in the lower edge of the spectrum. 
This leads to a rich and unexpected phenomenology. 
In particular, if the chain is again in a NESS, the scaling of the energy current with the system size 
happens to depend on boundary conditions and spectral factors of the reservoirs \cite{dhar}.
This finding reveals also the complete lack of local thermal equilibrium, 
that results eventually from integrability (see Section \ref{subsec: invariant quantities}).

The harmonic chain has three ``obvious'' conserved quantities: 
the total energy $H$, the total momentum $P = \sum_x p_x$ and the total stretch or elongation $R = \sum_x r_x$ with $r_x = (\nabla_+ q)_x$.   
This gives rise to the following microscopic conservation laws: 
$$
	\dot r_x = \frac{p_{x+1}}{m_{x+1}} - \frac{p_x}{m_x}, 
	\qquad
	\dot p_x = r_x - r_{x-1}, 
	\qquad 
	\dot e_x = \frac{r_x p_{x+1}}{m_{x+1}} - \frac{r_{x-1}p_x}{m_x}
$$
with $e_x = \frac{1}{2} \left( \frac{p_x^2 }{ m_x } + r_x^2 \right)$. 
After a hyperbolic rescaling of space and time, we ask in this paper whether the empirical densities of these conserved quantities converge to the densities $\mathbf r, \mathbf p$ and $\mathbf e$ governed by the macroscopic laws
$$
	\partial_t \mathbf r (y,t) = \frac{1}{\overline{m}} \partial_y \mathbf p (y,t), 
	\quad 
	\partial_t \mathbf p (y,t) = \partial_y \mathbf r (y,t), 
	\quad 
	\partial_t \mathbf e (y,t) = \frac{1}{\overline{m}}  \partial_y \big(\mathbf r (y,t) \mathbf p (y,t)\big) ,
$$
corresponding to Euler equations in Lagrangian coordinates, with $\overline{m}$ the average mass. 

Instances of rigorous derivation of Euler equations in the smooth regime rest on the ergodicity of the microscopic dynamics.
In \cite{eo,ovy,komorowski_olla}, the Hamiltonian dynamics is perturbed by some stochastic noise 
acting in such a way that conserved quantities are not destroyed but that the ergodicity of the dynamics can be established rigorously.  

{One of the main motivations of this work is to show that ergodicity 
is not in general a necessary assumption for Euler equations to hold. Indeed,} 
the dynamics considered here is purely Hamiltonian, non-ergodic, 
and possesses actually a full set of invariant quantities (see Section \ref{subsec: invariant quantities}). 
In the clean case, i.e.\@ when the masses are all equal, we show in Section \ref{subsec: clean} that Euler equations hold if and only if the temperature profile is constant.
Instead, in Section \ref{subsec: disordered}, we argue that Euler equations hold even out of thermal equilibrium if there is disorder on the masses. 
We briefly discuss the fate of other conserved quantities in Section \ref{subsec: other conserved}. 
Theorem \ref{the: main result} in Section \ref{sec: model and results} constitutes the main result of our paper: 
We show the convergence to Euler equations for the disordered harmonic chain, almost surely with respect to the masses and on average with respect to an initial local Gibbs state. 
The rest of the paper is devoted to the proof of this theorem.

\subsection{Clean harmonic chain}\label{subsec: clean}
Let us assume that all masses $m_x$ are equal, say $m_x = 1$ for simplicity. 
In this case, the equations of motion read
\begin{equation}\label{eq:1}
	\dot q_x = p_x , \qquad \dot p_x = \Delta q_x .
\end{equation}
Let us first consider the thermal equilibrium case: 
Assume that the initial configuration of the chain is random and distributed according to a Gaussian law $\mu_0$ with covariance matrix
\begin{equation}\label{eq:2}
	\lgs{(\nabla_+ q)_x ; (\nabla_+ q)_y} =  \lgs{ p_x ; p_y} = \beta^{-1} \delta_{x,y}, \qquad 
  	\lgs{q_x; p_y} = 0,
\end{equation}
for some inverse temperature $\beta$. 
It is easy to prove that at time $t>0$, the distribution $\mu_t$ in the phase space is still given by a Gaussian law with the same covariance matrix.  
To see this, just use Fourier transforms to diagonalize the dynamics:
\begin{equation}\label{eq:3}
  	\hat q(k,t) = \sum_{x\in \ZZ} e^{i 2\pi k x} q_x(t), \qquad  \hat p(k,t) = \sum_{x\in \ZZ} e^{i 2\pi k x} p_x(t),
\end{equation}
and define the wave function 
\begin{equation}\label{eq:4}
  	\hat \phi (k,t) = \omega(k)  \hat q(k,t) + i  \hat p(k,t) 
\end{equation}
where $\omega(k) = |2\sin(\pi k)|$ is the dispersion relation. Then, the explicit solution of \eqref{eq:1} is given by
\begin{equation}
  \label{eq:5}
   \hat \phi (k,t) = e^{-i\omega(k) t} \,  \hat \phi (k,0).
\end{equation}
The correlations \eqref{eq:2} imply that 
\begin{equation}\label{eq:5-0}
	\lgs{ \hat \phi (k,0)^* ;  \hat \phi (k',0)}  = 2 \beta^{-1} \delta(k-k'), \qquad  \lgs{ \hat \phi (k,0) ; \hat \phi (k',0)}=0.
\end{equation}
Consequently
\begin{equation}
\label{eq:6}
\begin{split}
& \lgs{ \hat \phi (k,t)^* ;  \hat \phi (k',t)} =   e^{i(\omega(k)- \omega(k')) t}\,  \lgs{ \hat \phi (k,0)^* ;  \hat \phi (k',0)}
= 2\beta^{-1} \delta(k-k'),\\
&\lgs{ \hat \phi (k,t) ;  \hat \phi (k',t)} =   e^{-i(\omega(k)+\omega(k')) t} \, \lgs{ \hat \phi (k,0) ;  \hat \phi (k',0)}
=0. 
\end{split} 
\end{equation}
From (\ref{eq:5-0}) and (\ref{eq:6}), we deduce that the covariances  \eqref{eq:2} are the same at any time $t$:
\begin{equation}
 \label{eq:2222}
  \lgs{(\nabla_+ q)_x (t) ; (\nabla_+ q)_y (t) } =  \lgs{ p_x (t)  ; p_y (t) } = \beta^{-1} \delta_{x,y}, \qquad 
  \lgs{q_x (t) ; p_y (t) } = 0,
\end{equation}
which implies that the Gaussian distribution $\mu_t$ differs from $\mu_0$ only by the averages $\bar r_x (t) = \lgs{r_x (t)} = \lgs{(\nabla_+ q)_x(t)}$ and $\bar p_x (t) = \lgs{p_x(t)}$ that, 
by linearity of the dynamics, evolve following the same equation \eqref{eq:1}. 

Assume now that the initial averages of the momentum $p_x$ and stretch $r_x = (\nabla_+ q)_x$ are slowly varying on a macroscopic scale. 
{More precisely, let $N$ be an integer representing a macroscopic number of sites in the chain, 
and let $\mathsf p, \mathsf r : \R \to \R$ be smooth and fast decaying initial macroscopic profiles. 
We let}
\begin{equation}
  \label{eq:7}
  {\bar r_{[Ny]}(0) = \mathsf r (y),\qquad
  \bar p_{[Ny]}(0) = \mathsf p (y)} . 
\end{equation}
Let $\widehat {\mathsf p} (\xi)$ and $\widehat {\mathsf r} (\xi)$ be the Fourier transforms (in $\RR$) of $\mathsf p(y)$ and $\mathsf r(y)$. 
Then, as $N\to\infty$, 
$\frac 1N \widehat{\bar p}(\tfrac \xi N) \longrightarrow \widehat {\mathsf p}(\xi)$ and 
$\frac 1N \widehat{\bar r}(\tfrac \xi N) \longrightarrow \widehat {\mathsf r} (\xi)$. After a straightforward analysis 
we have that
\begin{equation}
  \label{eq:8}
  \begin{split}
    \frac 1N \widehat{\bar p}\Big(\tfrac \xi N, Nt\Big) \longrightarrow \widehat {\mathbf p} \, (\xi,t), \qquad 
    \frac 1N \widehat{\bar r} \Big(\tfrac \xi N, Nt \Big)
    \longrightarrow \widehat {\mathbf r}\, (\xi,t )
  \end{split}
\end{equation}
where 
\begin{equation}
  \label{eq:9}
 \partial_t  \widehat {\mathbf r} \, (\xi,t) = -i {2\pi} \xi  \,  \widehat {\mathbf p} \, (\xi,t), \qquad 
 \partial_t  \widehat {\mathbf p} \, (\xi,t) = -i {2\pi} \xi \,  \widehat {\mathbf r} \, (\xi,t).
\end{equation}
Consequently {$\bar r_{[Ny]}(Nt)$ and $\bar p_{[Ny]}(Nt)$} converge {(as distributions)} to the solution of the linear wave equation
\begin{equation}
  \label{eq:10}
  \partial_t  \mathbf r \,(y,t) = \partial_y \mathbf p \, (y,t), \qquad 
 \partial_t \mathbf p \,(y,t) = \partial_y   \mathbf r \, (y,t).
\end{equation}
Let us now consider the energy per particle $e_x = \frac 12\left(p_x^2 + r_x^2\right)$. 
Its average under the distribution $\mu_t$ is 
$\lgs{ e_x (t)} = \beta^{-1} + \frac 12\left(\bar p_x^2(t) + \bar r_x^2(t)\right)$ since by (\ref{eq:2222}), the variance of $p_x$ and $r_x$, i.e. the temperature, remains constant in time. In the limit $N\to \infty$ we have
$$\lgs{ e_{[Ny]}(Nt)} \longrightarrow {\bf e} \, (y,t) = \beta^{-1} + \frac 12\left( {\bf p}^2 (y,t) + {\bf r}^2(y,t)\right),$$
i.e. it solves the equation
\begin{equation}
  \label{eq:11}
   \partial_t {\bf e}\, (y,t) = \partial_y \left( {\bf p}\, (y,t)   \, {\bf r}\, (y,t)\right).
\end{equation}
We recognize that (\ref{eq:10} - \ref{eq:11}) are the Euler equations. 
The above is the simplest example of propagation of local equilibrium and hydrodynamic 
limit in hyperbolic scaling: in a harmonic chain in thermal equilibrium at temperature $\beta^{-1}$, 
and the mechanical modes not in equilibrium, we  will have a persistence of 
the thermal equilibrium {at any time $t$}, 
while the mechanical modes evolve independently from the thermal mode  following the linear wave equation. 

Notice that the argument above does not require the distribution $\mu_0$ to be the thermal equilibrium measure defined by \eqref{eq:2}, 
and that it holds for any measure $\mu_0$ with translation invariant covariance given by
\begin{equation}
  \label{eq:2-c}
  \lgs{\nabla q_x ; \nabla q_y} =  \lgs{ p_x ; p_y} = C(x-y), \qquad 
  \lgs{q_x; p_y} = 0
\end{equation}
for a positive definite function $C(x)$. 
The only difference is then that in (\ref{eq:5-0}- \ref{eq:6}) the term $\beta^{-1}$ has to be replaced by the Fourier transform ${\widehat C} (k)$. 
Actually, the measure $\mu_t$ is not even a local equilibrium state \cite{BO_book}, underlining that the validity of Euler equations in this example does not require the propagation of local equilibrium.

The above argument rests on the translation invariance of the distribution of the thermal modes and fails if it is space inhomogeneous, 
for example if the starting distribution is given by a local Gibbs state with a slowly varying temperature $\beta_{N,x}^{-1} = \beta^{-1}(x/N)$, 
i.e.\@ a Gaussian measure with covariances
\begin{equation}
  \label{eq:12}
  \lgs{\nabla q_x ; \nabla q_y} =  \lgs{ p_x ; p_y} = \beta_{N,x}^{-1}\delta_{x,y}, \qquad 
  \lgs{q_x; p_y} = 0.
\end{equation}
In this case, even though the wave equation \eqref{eq:10} still holds, 
generally the energy 
equation \eqref{eq:11} is not valid. 
In fact the energies of each mode $k$ evolves autonomously, 
as we can see studying the limit evolution of the Wigner distribution defined by
\begin{equation}\label{eq:wigner1}
\begin{split}
    	\widehat W_N(\xi, k,t) &:= \frac 2N 
    	\Blgs{\widehat\phi^*\left(k - \tfrac{\xi}{2N}, Nt\right)
       	\widehat\phi\left(k + \tfrac{\xi}{2N}, Nt\right)}\\
   	 W_N(y, k,t) &:= \int e^{-i 2\pi \xi y}   \widehat W_N (\xi, k,t) \; d\xi,
\end{split}
\end{equation}
(the above definitions should be understood as distributions on $\R\times \Pi$ with $\Pi = \R\backslash \Z$).

In the limit as $N\to\infty$ the Wigner distribution converge to a positive distribution
with an absolutely continuous part, the local distribution of the thermal modes, and 
a singular part concentrated on $k=0$, the mechanical modes:
\begin{equation}
  \label{eq:18}
  \lim_{N\to\infty} \widehat W_N(\xi, k, t) = \widehat W_{th}(\xi, k,t) + \widehat W_m(\xi, t)\;  \delta_0(dk) 
\end{equation}
The mechanical part $\widehat W_m(\xi, t)$ is the Fourier transform of 
$\frac 12\left( {\bf p}^2 (y,t) + {\bf r}^2(y,t)\right)$.

A straightforward calculation gives for the thermal part (see \cite{dobru} or \cite{bos} for a rigorous argument):
\begin{equation}
  \label{eq:14}
  \widehat W_{th}(\xi, k,t) = e^{-i\omega'(k)\xi t} \; \widehat W_{th}(\xi, k,0).
\end{equation}
This implies that the inverse Fourier transform  $W_{th}(y, k,t)$ satisfies the transport equation
\begin{equation}
  \label{eq:13}
  \partial_t W_{th}(y, k,t) + \frac{\omega'(k)}{2\pi} \partial_y W_{th}(y, k,t) = 0.  
\end{equation}
It also follow that 
\begin{equation}
  \label{eq:15}
  \int  W_{th}(y, k,t) \; dk \ = \ \tilde {\bf e}(y,t)
\end{equation}
where $\tilde {\bf e}(y,t)$ is the limit profile of thermal energy (or temperature) defined as
\begin{equation}
  \label{eq:16}
  \frac 12 \left(\lgs{ r_{[Ny]}(Nt); r_{[Ny]}(Nt)} + \lgs{ p_{[Ny]}(Nt); p_{[Ny]}(Nt)}\right) \rightharpoonup \tilde {\bf e}(y,t).
\end{equation}
Consequently the thermal energy  $\tilde {\bf e}(y,t)$ evolves non autonomously following the equation
\begin{equation}
  \label{eq:17}
  \partial_t \tilde {\bf e}(y,t) + \partial_y J(y,t) = 0, \qquad J(y,t) = \int \omega'(k)  W_{th}(y, k,t) \; dk.
\end{equation}
We say that the system is in {\it local equilibrium} if $W_{th}(y,k) = \beta^{-1}(y)$ constant in $k$. 
This correspond to the fact that Gibbs measure gives uniform distribution on the modes.
Starting in thermal equilibrium means $W_{th}(y,k,0) = \beta^{-1}$ and trivially $W_{th}(y,k,t) = \beta^{-1}$ 
for any $t>0$.
But starting with local equilibrium, i.e. $W(y,k,0) = \beta^{-1}(y)$ constant in $k$, 
we have a non autonomous evolution of $\tilde e(y,t)$.

\subsection{Disordered harmonic chain}\label{subsec: disordered}
The situation so far can be summarized as follows. 
By linearity, the variables $r_x$ and $p_x$ admit a macroscopic limit described by \eqref{eq:10} independently of the initial temperature profile. 
The macroscopic equation \eqref{eq:11} predicts that the evolution of the energy is purely mechanical and that the temperature does not evolve with time. 
As it turns out, the evolution of the mechanical energy is correctly described by Euler equation (see the term $\mathcal A_N (t)$ in our decomposition \eqref{eq: average and fluctuation} below), 
but thermal fluctuations do in general evolve with time as well, except if the temperature profile is initially flat. 

This picture gets strongly modified if the masses are taken to be random. 
On the one hand, deriving the macroscopic evolution of the fields $r_x$ and $p_x$ becomes less obvious because some homogenization over the masses is required. 
This difficulty can be solved by the elegant method of the 
``corrected empirical measure'', see \cite{goncalves_jara,jara_landim,bernardin} 
(though we will actually solve it another way). 
On the other hand, and this is the main point in considering random masses, 
the evolution of the energy $e_x$ is now much better approximated by Euler equation.
Indeed, at a microscopic level, all thermal fluctuations are frozen thanks 
to Anderson localization and the evolution of the energy becomes purely mechanical. 

To understand this a little bit better, it is good to realize how the disorder modifies the nature 
of the eigenmodes $(\psi^k)_{1 \le k \le N}$ of the operator $M^{-1}\Delta$ for a finite chain of size $N$. 
As a consequence of Anderson localization \cite{anderson}, all modes at positive energy are spatially localized. 
However the localization length $\zeta_k$ diverges as one approaches the ground state: 
$$
	\zeta_k^{-1} \sim \omega^2_k \sim \Big(\frac{k}{N}\Big)^2,
$$
so that only the modes with $k\gtrsim \sqrt N$ are actually localized, while the modes $k \lesssim \sqrt N$ remain comparable to the modes of the clean chain \cite{verheggen,ajanki_huveneers}.
By imposing a smooth initial profile $\mathsf r, \mathsf p$, the initial local Gibbs state attributes a weight of order 1 to a few first modes above the ground state, and a weight of order 1 to all other modes together. 
The first ones are responsible for the transport of mechanical energy; 
all modes with $k \gg \sqrt N$ are localized and do not transport any thermal energy;  
all modes with $1  \ll k \le o (N)$ have a vanishing weight in the thermodynamic limit and can be neglected in the analysis. 

Finally, we would like to mention that, while the disorder considered here 
and the stochastic velocity exchange noise considered in \cite{eo,komorowski_olla} 
act in an obviously very different way, 
e.g.\@ the disorder preserves integrability while the stochastic noise makes the dynamic ergodic, 
they do produce the same effects in some respect. 
{Indeed the noise has only a very slow (negligeable) effect on the macroscopic modes.}
{ This bares some similarity with the fact that the disorder has very little influence on the low modes of the disordered chain,
while the dynamical noise provides an active hopping mechanism among the high modes. 
Consequently the dynamical noise produces a superdiffusive sub-ballistic spreading of the thermal energy \cite{jara_kom_olla,komorowski_olla}, 
that is not visible in the hyperbolic scaling.
Thus, the dynamical noise plays here as well a role analogous to the disorder only in the hyperbolic scaling by freezing the temperature profile.

It is important to notice that in the disordered case the temperature profile remains frozen at any time scale, 
including the diffusive time scale and further, see remark \ref{frozenforever}.
In particular this implies a vanishing thermal diffusivity for the disordered unpinned harmonic chain. This is not in contradiction with the divergence of the thermal conductivity observed in the NESS of the same unpinned system when connected to Langevin thermostats at different temperatures with free boundary conditions, see \cite{casher_lebowitz,ajanki_huveneers}, and our result sheds actually some light on the various behaviors for the conductivity found in \cite{dhar} depending on the boundary conditions. In fact the thermal conductivity divergence in the NESS is due to the fluctuations of the low mechanical modes, and in our analysis there is a clear separation of the behavior of the mechanical modes (responsable for the ballistic motion) and the high thermal modes that give the temperature profile.}

\subsection{Other conserved quantities}\label{subsec: other conserved}
Before moving on, let us briefly comment on the issue of the other conserved quantities of the system. 
These can also be written as a sum of local terms and lead thus to additional conservation laws. 
For example, 
$$
	I \; = \; \sum_{x} d_x \; = \; \frac{1}{2}\sum_x\left( \frac{(r_x - r_{x-1})^2}{m_x} + \left( \frac{p_{x+1}}{m_{x+1}} - \frac{p_x}{m_x} \right)^2 \right)
$$
is conserved (see Sections \ref{subsec: a priori} and \ref{subsec: invariant quantities}) and leads to the microscopic conservation law
$$
	\dot d_x = \left( \frac{p_{x+1}}{m_{x+1}} - \frac{p_{x}}{m_{x}} \right) \frac{r_{x+1} - r_x}{m_{x+1}} -  \left( \frac{p_{x}}{m_{x}} - \frac{p_{x-1}}{m_{x-1}} \right) \frac{r_{x} - r_{x-1}}{m_x} .
$$

It is thus natural to ask whether this relation generates also some macroscopic law. 
In the cases where we can derive the macroscopic evolution equation \eqref{eq:11} for the energy, it is easy to argue that the corresponding macroscopic density $\mathbf d (y,t)$ does not evolve with time in the hyperbolic scaling.
Indeed, we can decompose $d_x$ as the sum of a mechanical and a thermal contribution, as we do in \eqref{eq: average and fluctuation} below for the energy.
In this case, contrary to what happens for the energy, the mechanical contribution vanishes in the thermodynamic limit since $d_x$ depends on $r$ and $p$ only through their gradients, 
while the contribution from the thermal modes does not evolve with time, for the same reasons as it does not for the energy.  

All the other conserved quantities in this model that can be written as a sum of local terms are obtained by taking further gradients in the variables $r$ and $p$ (see Section \ref{subsec: invariant quantities}), 
and have thus no evolution either in the hyperbolic scaling.

\section{Model and results}\label{sec: model and results}

We define the model studied in this paper and we state our main result. 
For technical reasons, it is easier to work on a finite system of size $N$ and then let $N\to \infty$.

\subsection{Hamiltonian model}
The Hamiltonian $H$ on $\R^{2N}$ is defined by 
$$
	H (q,p) = \frac{1}{2} \sum_{x=1}^N \left( \frac{p_x^2}{m_x} + ((\nabla_+q)_{x})^2 \right).
$$
{For concreteness, we assume} free boundary conditions, i.e.\@ $q_0 = q_1$ and $q_{N+1} = q_N$; 
{other boundary conditions such as fixed or periodic could be considered just as well.} 
The masses $(m_x)_{1 \le x \le N}$ are i.i.d.\@ random variables. 
In order to avoid any technical difficulty in exploiting known results from the Anderson localization literature, we assume that the law of $m_x$ admits a smooth density compactly supported in $[m_-,m_+]$ with $m_->0$. 

The equations of motion read $M \dot q = p$ and $\dot p = \Delta q$ where $M$ is the square diagonal matrix of size $N$ with entries defined by $M_{x,y} = \delta (x-y) m_x$ ($\delta(z)$ is defined by $1$ for $z=0$ and $0$ otherwise).
It is more convenient to express the equations of motion in terms of the displacement variables 
\begin{equation*}
	r_x  =  (\nabla_+ q)_x \qquad (1 \le x \le N-1) .  
\end{equation*}
The equations of motion become
\begin{equation}\label{eq: equations of motion}
	\dot r_x = \big( \nabla_+ M^{-1}p \big)_x \quad (1 \le x \le N-1), 
	\qquad \dot p_x = (\nabla_- r)_x \quad (1 \le x \le N)
\end{equation}
where we use fixed boundary conditions for $r$ in the second equation: $r_0 = r_N = 0$.

\subsection{Gibbs and locally Gibbs states}
We consider three locally conserved quantities in the bulk: 
$$ H = \sum_{x=1}^N e_x =  \sum_{x=1}^N \left( \frac{p^2_x}{2m_x} + \frac{r_x^2}{2} \right), \qquad P = \sum_{x=1}^N p_x, \qquad R = \sum_{x=1}^{N-1} r_x. $$
The energy $H$ and the momentum $P$ are actually truly conserved, but the conservation of $R$ is broken at the boundary: $\dot R = m_N^{-1}p_N - m_1^{-1}p_1$. 

The Gibbs states are characterized by three parameters: 
$\beta>0$ and $\mathsf p, \mathsf r \in \R$. Its probability density writes
$$ 
\rho_{\text{G}} (r,p) = \frac{1}{Z_{\text{G}}} \exp \Big\{- \frac{\beta}{2} \sum_{x=1}^{N} m_x\Big(\frac{p_x}{m_x} - \frac{\mathsf p}{\overline m}\Big)^2 - \frac{\beta}{2} \sum_{x=1}^{N-1} (r_x - \mathsf r)^2  \Big\} .
$$
where $\overline m$ denotes the mean mass and $Z_{\text{G}} :=Z_{\text{G}} (\beta, \mathsf p, \mathsf r)$ is a normalizing constant.
Local Gibbs states are obtained by replacing the constant parameters $\beta, \mathsf p, \mathsf r$ by functions 
$$\beta,\mathsf p, \mathsf r : [0,1] \to \R,$$  
with $\beta(x)>0$ for all $x \in [0,1]$, and by considering the measure with density 
\begin{equation}\label{eq: local Gibbs}
	\rho_{\text{loc}} (r,p) = \frac{1}{Z_{\text{loc}}} \exp \Big\{- \frac{1}{2} \sum_{x=1}^{N} \beta(x/N) m_x\Big(\frac{p_x}{m_x} - \frac{\mathsf p(x/N)}{\overline{m}}\Big)^2 - \frac{1}{2} \sum_{x=1}^{N-1} \beta(x/N) (r_x - \mathsf r(x/N))^2  \Big\}
\end{equation}
where $Z_{\text{loc}}:=Z_{\text{loc}} (\beta, \mathsf p, \mathsf r)$ is a normalizing constant.
We impose the following regularity conditions on $\beta, \mathsf p, \mathsf r$: 
\begin{equation}\label{eq: regularity beta r p}
	\beta \in \mathcal C^0([0,1]), 
	\quad \mathsf r \in \mathcal C^1([0,1]) \text{ with }\mathsf r(0) = \mathsf r(1) = 0, 
	\quad \mathsf p \in \mathcal C^1([0,1]).
\end{equation}
We take such a local Gibbs state as initial state.
Below, we denote the expectation with respect to it by $\lgs{\cdot}$:
$$
	\lgs{F} = \int F(r,p) \rho_{\text{loc}} (r, p) \, \dd r \dd p.
$$ 
Instead, expectation (resp.\@ probability) with respect to the masses is denoted by $\mathsf E$ (resp.\@ $\mathsf P$).

\subsection{Evolution of the locally conserved quantities} 
Let us fix some maximal time $T>0$. 
Let us define the fields $\mathcal R$, $\mathcal P$ and $\mathcal E$ acting on functions $f \in \mathcal C^0([0,1])$ as 
\begin{equation}\label{eq:rpe}
\begin{split}
&\mathcal R(f,t) = \int_0^1 \mathbf r (y,t)\,  f(y) \, \dd y, \quad  \mathcal P(f,t) = \int_0^1 \mathbf p (y,t) \, f(y)\,  \dd y,\\
&\mathcal E(f,t) = \int_0^1 \mathbf e (y,t) \, f(y) \, \dd y. 
\end{split}
\end{equation}
for all $t \in [0,T]$. 
The kernels $\mathbf r$, $\mathbf p$ and $\mathbf e$ are defined as follows. 
First, at $t=0$, we impose
$$
	\mathbf r(y,0) = \mathsf r(y), \qquad 
	\mathbf p(y,0) = \mathsf p(y), \qquad
	\mathbf e(y,0) =  \frac1{\beta(y)} +  \frac{\mathsf p^2 (y)}{2\overline m} + \frac{\mathsf r^2 (y)}{2}. 
$$
Next, the evolution at all further time is governed by the following system of conservation laws: 
\begin{align}
	&\partial_t \mathbf r(y,t) = \frac1{\overline m} \partial_y \mathbf p(y,t), \qquad \mathbf r(0,t) = \mathbf r(1,t) = 0, 
	\label{eq:equa diff r}\\
	&\partial_t \mathbf p(y,t) = \partial_y \mathbf r (y,t), 
	\label{eq:equa diff v}\\
	&\partial_t \mathbf e(y,t) = \frac1{\overline m} \partial_y (\mathbf r(y,t) \mathbf p(y,t)).
	\label{eq:equa diff e}
\end{align}

Thanks to the regularity conditions on $\mathsf r, \mathsf p$ in \eqref{eq: regularity beta r p}, the solutions of these equations are classical.
Since $(\mathbf r, \mathbf p)$ are solution of wave equations with suitable boundary conditions, they can be obtained explicitly by expanding them in Fourier series.  
Then, by a time integration, $\mathbf e$ may be expressed as a function of $(\mathbf r, \mathbf p)$, see (\ref{eq: evolution energy limit}). 
Later we will use that a classical solution for the system governing $(\mathbf r, \mathbf p)$ coincides with the (unique) weak solution of this system. 
Because of the boundary conditions, test functions will have to be chosen appropriately (see (\ref{eq: R characterization}-\ref{eq: P characterization})). 

\begin{Theorem}\label{the: main result}
Let $t\in [0,T]$ and $f \in \mathcal C^0([0,1])$. 
Let us assume that the system is initially prepared in a locally Gibbs state such that $\beta$, $\mathsf r$ and $\mathsf p$ satisfy \eqref{eq: regularity beta r p}. 
Then, as $N\to \infty$, almost surely (w.r.t.\@ $\mathsf P$), 
\begin{align}
	&\mathcal R_N (f,t) \; = \;  \frac{1}{N} \sum_{x=1}^N f(x/N) \; \lgs{r_x (Nt)} \qquad \to \qquad \mathcal R(f,t), \label{R limit}\\
	&\mathcal P_N (f,t) \; = \; \frac{1}{N} \sum_{x=1}^N f(x/N) \; \lgs{p_x (Nt)} \qquad \to \qquad \mathcal P(f,t), \label{P limit}\\
	&\mathcal E_N (f,t) \; = \; \frac{1}{N} \sum_{x=1}^N f(x/N) \; \lgs{e_x (Nt)} \qquad \to \qquad \mathcal E(f,t) \label{E limit}.
\end{align}
\end{Theorem}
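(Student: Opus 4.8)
The plan is to exploit the linearity of the dynamics to separate a purely mechanical (deterministic) part from a thermal (fluctuation) part, and to reduce the convergence of each to a homogenization statement about the eigenmodes $\psi^k$ of $M^{-1}\Delta$. Since the equations of motion \eqref{eq: equations of motion} are linear and the masses are quenched, the averages $\bar r_x(t) = \lgs{r_x(t)}$ and $\bar p_x(t) = \lgs{p_x(t)}$ solve the very same linear system, with initial data read off from \eqref{eq: local Gibbs}: $\bar r_x(0) = \mathsf r(x/N)$ and $\bar p_x(0) = (m_x/\overline m)\,\mathsf p(x/N)$, the mass factor arising because under $\rho_{\text{loc}}$ it is the velocity $p_x/m_x$ that has mean $\mathsf p(x/N)/\overline m$. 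Moreover the initial covariance of $(r,p)$ is diagonal, with $\mathrm{Var}(r_x) = 1/\beta(x/N)$ and $\mathrm{Var}(p_x) = m_x/\beta(x/N)$. This reduces \eqref{R limit}--\eqref{P limit} to understanding the deterministic linear flow, and the energy \eqref{E limit} to that flow together with the propagation of the diagonal covariance, since $\lgs{e_x(Nt)} = \tfrac12\big(\bar p_x^2/m_x + \bar r_x^2\big)(Nt) + \tfrac12\big(\mathrm{Var}(p_x)/m_x + \mathrm{Var}(r_x)\big)(Nt)$.

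For $\mathcal R_N$ and $\mathcal P_N$ I would close the weak formulation. Taking the hyperbolically scaled time derivative and summing by parts gives, up to boundary and discretization errors, $\frac{d}{dt}\mathcal R_N(f,t) \approx -\frac1N\sum_x f'(x/N)\,\bar v_x(Nt)$ with $\bar v = M^{-1}\bar p$, and $\frac{d}{dt}\mathcal P_N(f,t) \approx -\frac1N\sum_x f'(x/N)\,\bar r_x(Nt)$. The crucial input is a homogenization lemma asserting that, almost surely, $\frac1N\sum_x g(x/N)\bar r_x(Nt)\to\int g\,\mathbf r$ and $\frac1N\sum_x g(x/N)\bar v_x(Nt)\to\int g\,\mathbf p/\overline m$ for smooth $g$, the mass having been averaged to $\overline m$. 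I would prove this by expanding $(\bar r,\bar p)$ over the $\psi^k$ and invoking the localization picture of Section \ref{subsec: disordered}: a smooth profile is carried, up to $o(1)$ weight, by the $O(\sqrt N)$ low modes $k\lesssim\sqrt N$, which are delocalized and plane-wave-like with the homogenized sound speed $1/\sqrt{\overline m}$ and reproduce the clean wave evolution, while the localized modes are neither excited by the smooth data nor contribute to the macroscopic fields. With uniform-in-$t$ control of the errors and the initial conditions $\mathcal R_N(f,0)\to\int f\,\mathbf r(\cdot,0)$, $\mathcal P_N(f,0)=\frac1N\sum_x f(x/N)\tfrac{m_x}{\overline m}\mathsf p(x/N)\to\int f\,\mathsf p$ (law of large numbers for the masses), uniqueness of the weak solution identifies the limit with $(\mathbf r,\mathbf p)$.

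The mechanical part of the energy follows from the same homogenization applied to quadratic functionals: $\frac1N\sum_x f(x/N)\,\tfrac12\big(\bar p_x^2/m_x + \bar r_x^2\big)(Nt) = \frac1N\sum_x f(x/N)\,\tfrac12\big(m_x\bar v_x^2 + \bar r_x^2\big)(Nt)$, where the law of large numbers for the quenched masses (decorrelated from the slowly varying $\bar v_x$) sends $m_x$ to $\overline m$ and yields $\int f\,\big(\mathbf p^2/2\overline m + \mathbf r^2/2\big)$.

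The main obstacle is the thermal part of \eqref{E limit}: proving $\frac1N\sum_x f(x/N)\,\tfrac12\big(\mathrm{Var}(p_x)/m_x + \mathrm{Var}(r_x)\big)(Nt)\to\int f/\beta$, i.e. that the temperature profile stays frozen. Here I would propagate the diagonal initial covariance by the linear flow and decompose it over the $\psi^k$: since each normal mode evolves as an autonomous harmonic oscillator, its expected thermal energy is conserved in time, so the localized modes ($k\gtrsim\sqrt N$) keep their energy at their localization site, where it was initially fixed by $1/\beta$, and the density does not move; meanwhile the $O(\sqrt N)$ delocalized low modes carry only $O(\sqrt N)=o(N)$ of the total thermal energy and are negligible after division by $N$. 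Making this quantitative is the crux: it requires the precise localization-length scaling $\zeta_k^{-1}\sim(k/N)^2$ together with eigenfunction localization bounds from the Anderson literature, and an equidistribution statement matching the squared eigenfunction densities $|\psi^k_x|^2$ against the profile $1/\beta$. Finally, the almost-sure statement in the masses would be obtained by combining these localization estimates (valid with overwhelming probability) with the law of large numbers for the mass functionals through a Borel--Cantelli argument along $N\to\infty$.
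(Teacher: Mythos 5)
Your overall architecture is the paper's: the same decomposition of $\lgs{e_x(Nt)}$ into a mean mechanical part and a centered thermal part, the negligibility of the low modes by counting (each carries $O(1)$ expected thermal energy), and a frozen temperature profile via localization plus conservation of per-mode energies. But your route for the wave part has a genuine gap. You propose to prove the homogenization $\frac1N\sum_x g(x/N)\,m_x^{-1}\lgs{p_x(Nt)} \to \int g\,\mathbf p/\overline m$ by expanding over the eigenmodes and asserting that the low modes $k\lesssim\sqrt N$ are ``plane-wave-like with the homogenized sound speed $1/\sqrt{\overline m}$''. No quantitative eigenvector-homogenization statement of this kind is available (the disordered-chain literature controls localization lengths and transmission coefficients, not the shape of the delocalized modes), and proving it would be a substantial problem in itself. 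The paper avoids any delocalization input entirely: it uses the low modes only through counting. The actual mechanism is the second conserved quantity $I$ of \eqref{eq: I conserved} (the Hamiltonian of the gradient variables), which yields the uniform-in-$N,t$ H\"older-$1/2$ bounds \eqref{eq: Holder continuity} on $\lgs{r_x(Nt)}$ and $m_x^{-1}\lgs{p_x(Nt)}$; the replacement of $m_x^{-1}$ by $\overline m^{-1}$ is then the elementary block-averaging Lemma \ref{lem: replacement} (blocks of size $N^\tau$, $\tau>1/2$, fourth moments of the i.i.d.\ masses, Borel--Cantelli, with the H\"older bound freezing the test function on each block). Your phrase ``decorrelated from the slowly varying $\bar v_x$'' is precisely the assertion that needs proof, and the $I$-based regularity is the missing ingredient that delivers it --- it also gives the pointwise convergence needed to pass to the quadratic mechanical-energy functional, which you likewise assert rather than derive.

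Two further problems in the thermal part. First, your cutoff at $\sqrt N$ does not work with the available estimates: modes with $k$ just above $\sqrt N$ have localization length $\zeta_k\sim(N/k)^2\sim N$, so localization bounds are useless there. The paper cuts instead at $N^{1-\alpha}$ with $\alpha$ small: all modes $k\le N^{1-\alpha}$ are discarded by the counting argument (each has expected thermal energy bounded by $1/\beta_-$, see \eqref{eq:avant30} and \eqref{eq: beta minus equation}, giving a total contribution $O(N^{-\alpha})$), and localization is invoked only for $k>N^{1-\alpha}$, where $\zeta(\alpha)=N^{2\alpha}\ll N$ and Lemma \ref{lem: localization} holds almost surely. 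Second, the ``equidistribution statement matching $|\psi^k_x|^2$ against $1/\beta$'' you call for is both unnecessary and hard: the paper never evaluates $\mathcal F_N(t)$ directly. It proves the frozen-profile statement $\mathcal F_N(t)-\mathcal F_N(0)\to0$ of \eqref{eq: 2d limit energy} by splitting $f=f_0(k)+\widetilde f_k$ around the localization center of each high mode; the $f_0(k)$ contribution \eqref{eq: F2 part 1} is exactly time-independent by conservation of the per-mode energies \eqref{eq: energy of the modes}, while the $\widetilde f_k$ contribution \eqref{eq: F2 part 2} is $o(1)$ using $|\widetilde f_k(x/N)|\le\Const|x-x_0(k)|/N$ (this is where $f\in\mathcal C^1$ enters) together with Lemma \ref{lem: localization}. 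The value $\int f/\beta$ then comes from a direct computation of $\mathcal F_N(0)$ under the product initial measure. With these repairs --- the $I$-based H\"older estimate plus block averaging in place of eigenvector homogenization, the cutoff moved from $\sqrt N$ to $N^{1-\alpha}$, and the compare-to-$t=0$ mechanism in place of equidistribution --- your outline becomes the paper's proof.
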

\begin{Remark}
As pointed out in the introduction, the situation is much simpler at thermal equilibrium, i.e.\@ for $\beta$ constant, and these limits hold even for the non-disordered chain.
See Section \ref{subsec: thermal equilibrium} for a derivation along the lines used to derive Theorem \ref{the: main result}. 
\end{Remark}

\section{Evolution of $\mathcal R_N$ and $\mathcal P_N$}

In this section, we show the limits (\ref{R limit}-\ref{P limit}). 
Moreover, in order to later deal with the field $\mathcal E_N$, we show more: 

The functions $\lgs{r_{[Ny]} (Nt)}$ and $m_{[Ny]}^{-1}\lgs{ p_{[Ny]} (Nt)}$ are uniformly (in $N$) H\"older regular in $y\in [0,1]$, with exponent at least $1/2$. 
Hence they converge pointwise to $\mathbf r(y,t)$ and $\mathbf p (y,t)$ respectively. 

\subsection{A priori estimates}\label{subsec: a priori}
Given $d\in \N$, we denote the standard scalar product on $\R^d$ by $\langle \cdot , \cdot \rangle_d$ (we will drop the subscript $d$ when no confusion seems possible).
Let us consider the two following conserved quantities: 
\begin{align}
	H(r,p) &= \frac12 \big(  \langle p ,M^{-1 } p \rangle_N + \langle r ,r \rangle_{N-1} \big), \\
	I(r,p) &= \frac12 \big( \langle \nabla_- r, M^{-1} \nabla_- r \rangle_N + \langle \nabla_+ M^{-1} p , \nabla_+ M^{-1}p \rangle_{N-1} \big).\label{eq: I conserved}
\end{align}
The conservation of $I$ follows from the fact that, if $(r,p)$ solve \eqref{eq: equations of motion}, then $(\nabla_+ M^{-1}p, \nabla_- r)$ solve the same equation, the corresponding Hamiltonian being $I$ 
(since we have that $H (\nabla_+ M^{-1} p, \nabla_- r) =I (r,p)$).
Notice also that a full set of conserved quantities can be generated by further taking gradients, see Section \ref{subsec: invariant quantities}. 

Thanks to these two conservation laws, and to the smoothness assumptions on $\mathsf r$ and $\mathsf p$, we deduce
\begin{Lemma}\label{lem-bounds}
There exists {a deterministic} $\Const$ such that, for any $t \ge 0$ and any $N \in \N$, 
\begin{align} 
	&\sum_{x=1}^{N-1} \lgs{ r_x(Nt) }^{2} \le \Const N, \qquad 
	\sum_{x=1}^N \lgs{ p_x(Nt) }^2 \le \Const N, 
	\label{eq: L2 estimate}\\
	&\sum_{x=1}^N \lgs{ (\nabla_- r)_x (Nt)}^2 \le \frac\Const{N}, \qquad 
	\sum_{x=1}^{N-1} \lgs{ (\nabla_+ M^{-1} p)_x (Nt) }^2 \le \frac\Const{N}.
	\label{eq: H1 estimate}
\end{align}
\end{Lemma}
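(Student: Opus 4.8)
The plan is to reduce everything to the deterministic evolution of the averaged fields and then to exploit the two conservation laws $H$ and $I$. Set $\bar r_x(t) := \lgs{r_x(t)}$ and $\bar p_x(t) := \lgs{p_x(t)}$. Since the equations of motion \eqref{eq: equations of motion} are linear and the randomness sits only in the initial local Gibbs state (the masses being quenched), the linear solution operator commutes with the expectation $\lgs{\cdot}$, so $(\bar r,\bar p)$ solves the very same system \eqref{eq: equations of motion} with the same boundary conditions $r_0=r_N=0$. Consequently the quadratic quantities $H(\bar r(t),\bar p(t))$ and $I(\bar r(t),\bar p(t))$ are conserved along this evolution, and it suffices to bound them at $t=0$ and to check that they control the four sums in \eqref{eq: L2 estimate}--\eqref{eq: H1 estimate} at the time $Nt$.

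The control in the desired direction is immediate from the boundedness of the masses, $m_-\le m_x\le m_+$: bounding $m_x^{-1}\ge m_+^{-1}$ in the quadratic forms of $H$ and $I$ gives
\[ \sum_x \bar p_x(t)^2 \le 2 m_+\, H(\bar r,\bar p), \qquad \sum_x \bar r_x(t)^2 \le 2\, H(\bar r,\bar p), \]
\[ \sum_x (\nabla_- \bar r)_x(t)^2 \le 2 m_+\, I(\bar r,\bar p), \qquad \sum_x (\nabla_+ M^{-1}\bar p)_x(t)^2 \le 2\, I(\bar r,\bar p), \]
so by conservation the whole matter reduces to showing $H(\bar r(0),\bar p(0))\le \Const N$ and $I(\bar r(0),\bar p(0))\le \Const/N$ with deterministic constants. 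The initial averages are read off from the density \eqref{eq: local Gibbs}: the Gaussian is centred so that $\bar r_x(0)=\mathsf r(x/N)$, and, since the momentum term is $\tfrac{\beta}{2m_x}\big(p_x-m_x\,\mathsf p(x/N)/\overline m\big)^2$, we get $\bar p_x(0)=m_x\,\mathsf p(x/N)/\overline m$. The estimate $H(\bar r(0),\bar p(0))\le \Const N$ is then direct from $\sum_x \mathsf r(x/N)^2\le N\|\mathsf r\|_\infty^2$ and $\sum_x m_x^{-1}\bar p_x(0)^2=\overline m^{-2}\sum_x m_x\,\mathsf p(x/N)^2\le (m_+/m_-^2)\,N\|\mathsf p\|_\infty^2$.

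For $I(\bar r(0),\bar p(0))$ the crucial observation is that the \emph{velocity} profile is mass-independent, $(M^{-1}\bar p)_x(0)=m_x^{-1}\bar p_x(0)=\mathsf p(x/N)/\overline m$. Hence the discrete gradients entering $I$ are finite differences of the smooth functions $\mathsf r$ and $\mathsf p/\overline m$; by the $\mathcal C^1$ regularity in \eqref{eq: regularity beta r p} each such difference is $O(1/N)$ uniformly (Lipschitz bound), and the boundary conditions $\mathsf r(0)=\mathsf r(1)=0$ match $r_0=r_N=0$, so no edge contribution spoils the estimate. Summing $N$ terms each of size $O(1/N^2)$ yields $I(\bar r(0),\bar p(0))\le \Const/N$, which closes the argument.

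The one delicate point — and the step I expect to be the real content rather than a routine computation — is precisely the smallness $I(0)=O(1/N)$. It hinges on the fact that the local Gibbs state centres the \emph{velocities} $p_x/m_x$ on the smooth, mass-free profile $\mathsf p/\overline m$ rather than centring the momenta $p_x$ themselves; had the momenta been pinned to a smooth profile, the quenched disorder would make $(M^{-1}\bar p)_x$ fluctuate at scale $O(1)$, its gradients would be $O(1)$, and $I(0)$ would fail to be small, collapsing the $\mathcal C^{1/2}$ regularity claimed just after the lemma. Finally, because every bound uses only $m_-\le m_x\le m_+$ and $m_-\le \overline m\le m_+$, the constant $\Const$ is deterministic, independent of the realization of the masses and of $N$, as asserted.
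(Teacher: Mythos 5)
Your proof is correct and follows exactly the paper's route: by linearity the averaged fields $(\lgs{r},\lgs{p})$ solve \eqref{eq: equations of motion}, so $H$ and $I$ evaluated on them are conserved, the four sums are controlled by these quadratic forms via $m_-\le m_x\le m_+$, and everything reduces to the $t=0$ computation with the local Gibbs means. Your explicit verification that $\bar p_x(0)=m_x\mathsf p(x/N)/\overline m$ makes $(M^{-1}\bar p)(0)$ mass-free, and that the boundary condition $\mathsf r(0)=\mathsf r(1)=0$ keeps the edge terms of $\nabla_-\bar r$ of order $1/N$, is precisely the ``direct computation'' the paper leaves implicit (and the paper likewise flags the role of the boundary condition on $\mathsf r$ there).
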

\begin{proof}
By linearity of the equations of motion \eqref{eq: equations of motion}, $(\lgs{ r }, \lgs{ p } )$ solve the same equations as $(r,p)$. 
Therefore, the conservation of $H(r,p)$ and $I(r,p)$ implies the conservation of $H(\lgs{ r }, \lgs{ p })$ and $I(\lgs{ r }, \lgs{ p })$. 
Since the quantities to be estimated in \eqref{eq: L2 estimate} are bounded by $H(\lgs{ r }, \lgs{ p })$ 
and the quantities to be estimated in \eqref{eq: H1 estimate} are bounded by $I(\lgs{ r }, \lgs{ p })$, 
we conclude that is it enough to establish them respectively for $H(\lgs{r}, \lgs{p})$ and  $I(\lgs{r}, \lgs{p})$ at $t=0$. 
This follows from a direct computation, thanks to the product structure of the local Gibbs state \eqref{eq: local Gibbs} and to the hypotheses on $\mathsf r$ and $\mathsf p$ in \eqref{eq: regularity beta r p} 
(in particular, this is the place where the boundary condition on $\mathsf r$ plays a role). 
\end{proof}
\noindent

{
\begin{Remark}
Notice that the bounds in Lemma \ref{lem-bounds} are actually valid for any time scale $N^\alpha t$, for any $\alpha>0$.
\end{Remark}
}

As a corollary, we deduce the existence of a constant $\Const\in \R$ such that, for any $x,y \in \Z \cap [1,N]$,
\begin{equation}
\label{eq: Holder continuity}
\begin{split}
&\big| \lgs{r_{x'}(Nt) } - \lgs{r_x(Nt) } \big| \le  \Const \left|\tfrac{x'-x}N \right|^{1/2},\\
&\big| m_{x'}^{-1} \lgs{ p_{x'}(Nt) } -  m_x^{-1} \lgs{ p_x(Nt) } \big|  \le  \Const \Big|\tfrac{x'-x}N \Big|^{1/2},
\end{split}
\end{equation}
and therefore also such that
\begin{equation}\label{eq: bounded r and p}
	|\lgs{ r_x(Nt) }| \le \Const, \quad |\lgs{ p_x(Nt) }| \le \Const .
\end{equation}
Indeed, to get e.g.\@ \eqref{eq: Holder continuity} for $r$, we deduce from \eqref{eq: H1 estimate} that 
\begin{equation*}
\begin{split}
\big|\lgs{ r_{x'}(Nt) } - \lgs{ r_x(Nt) } \big|&=\left|\sum_{z=x+1}^{x'} \lgs{ (\nabla_- r)_z(Nt)} \right| \le \left( \sum_{z=1}^N \lgs{ (\nabla_- r)_z (Nt) }^2 \right)^{1/2} |x-x'|^{1/2} \\
&\le \Const \Big|\tfrac{x'-x}N \Big|^{1/2}.
\end{split}
\end{equation*}
{Next \eqref{eq: bounded r and p} follows from \eqref{eq: Holder continuity} if, given $N,t$, there exists at least some $x_0$ such that the inequalities hold.
This in turn follows from \eqref{eq: L2 estimate}.}

\subsection{Averaging lemma for the field $\mathcal P_N$}
The method of the corrected empirical measure is an elegant method to deal with the randomness on the masses in deriving the hydrodynamic limit for $\mathcal R_N$ and $\mathcal P_N$ \cite{goncalves_jara,jara_landim,bernardin}.
However, in our case, it seems more convenient to use the following lemma: 

\begin{Lemma}\label{lem: replacement}
Let $f \in \mathcal C^0([0,1])$ and $t\ge 0$. 
Almost surely (w.r.t.\@ the masses), for $N\to\infty$, 
\begin{align}
	 &\frac1N \sum_{x=1}^N f (x/N) \frac{\lgs{ p_x (Nt) }}{m_{x}} (m_x - \overline m) \quad \to \quad 0,
	 \label{eq: replacement}\\
	 &\frac1N \sum_{x=1}^N f (x/N) \left(\frac{\lgs{ p_x (Nt) }}{m_{x}}\right)^2 (m_x - \overline m) \quad \to \quad 0.
	 \label{eq: replacement bis}
\end{align}
\end{Lemma}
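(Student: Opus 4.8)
The plan is to exploit the a priori bounds already established together with the i.i.d.\@ structure of the masses, via a second-moment (variance) computation over the disorder combined with Borel--Cantelli. The key observation is that both sums are, for fixed $N$ and $t$, averages of the form $\frac1N\sum_x g_x\,(m_x-\overline m)$ where $g_x$ is a bounded quantity that depends on the masses only weakly, so that the centering factor $(m_x-\overline m)$ should produce cancellation. For the first sum take $g_x = f(x/N)\,m_x^{-1}\lgs{p_x(Nt)}$ and for the second $g_x = f(x/N)\,\big(m_x^{-1}\lgs{p_x(Nt)}\big)^2$; by \eqref{eq: bounded r and p} and $m_x\ge m_-$, these are uniformly bounded deterministically, say $|g_x|\le \Const$. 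Denote the sum of interest by $S_N=\frac1N\sum_{x=1}^N g_x(m_x-\overline m)$.

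First I would control the expectation $\mathsf E[S_N]$. Here the subtlety is that $g_x$ is \emph{not} independent of $m_x$, since $\lgs{p_x(Nt)}$ depends on the whole mass configuration through the evolution; hence $\mathsf E[g_x(m_x-\overline m)]$ need not vanish exactly. I would argue that it is nonetheless small: using the Hölder continuity \eqref{eq: Holder continuity} to replace $g_x$ by its value on a neighboring block of size $\ell$, and using that $\overline m$ is the mean of $m_x$, the correlation between $g_x$ and the local fluctuation $m_x-\overline m$ is of order $1/\ell$ up to the sampling error of $\overline m$ by the empirical mean over the block, which by the law of large numbers is $o(1)$ almost surely. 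Optimizing $\ell$ then gives $\mathsf E[S_N]\to 0$.

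Next I would bound the variance $\mathsf{Var}(S_N)$. Writing it as a double sum $\frac1{N^2}\sum_{x,x'} \mathsf{Cov}\big(g_x(m_x-\overline m),\,g_{x'}(m_{x'}-\overline m)\big)$, each covariance is bounded by $\Const^2\,\mathsf E[|m_x-\overline m|\,|m_{x'}-\overline m|]\le \Const'$ since the masses lie in $[m_-,m_+]$, giving the trivial bound $\mathsf{Var}(S_N)\le \Const'$. To beat this I would again use localization: the dependence of $g_x$ on masses far from site $x$ decays because the eigenmodes governing $\lgs{p_x(Nt)}$ are localized (the high modes with $k\gg\sqrt N$) or comparable to clean modes with controlled spatial structure (the low modes), so $\mathsf{Cov}$ decays in $|x-x'|$ fast enough that $\mathsf{Var}(S_N)=O(N^{-1+\epsilon})$, summable along a subsequence. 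A Borel--Cantelli argument along $N_j=j^2$ together with the uniform continuity of $N\mapsto S_N$ controlled by \eqref{eq: Holder continuity} then upgrades convergence in probability to almost sure convergence for all $N$.

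The main obstacle I expect is precisely the decorrelation estimate: making rigorous that $\lgs{p_x(Nt)}$ depends only weakly on distant masses, so that the variance genuinely decays rather than staying $O(1)$. This is where the Anderson localization input is essential, and it is the crux of handling the disorder; the boundedness and Hölder regularity from Lemma \ref{lem-bounds} and \eqref{eq: Holder continuity} are the easy inputs, but they alone only give the trivial variance bound. An alternative route that sidesteps sharp correlation decay would be to prove the averages converge in an $L^2(\mathsf P)$ sense using a martingale or McDiarmid-type bounded-differences inequality (each $m_x$ changes $S_N$ by $O(1/N)$ up to localization corrections), yielding concentration of $S_N$ around $\mathsf E[S_N]$ with Gaussian tails and hence almost sure convergence by Borel--Cantelli; I would pursue whichever of these the localization estimates most readily support.
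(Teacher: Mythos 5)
Your plan goes wrong at the step you yourself flag as the crux: the decorrelation estimate for $\mathsf{Cov}(g_x(m_x-\overline m),g_{x'}(m_{x'}-\overline m))$, which you propose to extract from Anderson localization. No such input is available here, and it is in fact the wrong mechanism: the averaged field $\lgs{ p_x(Nt) }$ is carried by the \emph{low} modes, whose localization length diverges at the bottom of the spectrum --- these modes are extended on the scale of the whole chain, so the dependence of $\lgs{ p_x(Nt) }$ on a distant mass $m_{x'}$ does not decay in $|x-x'|$ in any useful way. This is consistent with the structure of the paper, which uses localization only for the thermal fluctuation part $\mathcal F_N$ in Section \ref{sec: energy} (``this is the only place where localization is used''), not for Lemma \ref{lem: replacement}. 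Your fallback (McDiarmid/bounded differences) has the same missing ingredient: you would need to control the effect of changing a single mass on the full dynamics over the ballistic time $Nt$, and nothing in the a priori estimates gives the $O(1/N)$ per-coordinate bound you assert. Your treatment of $\mathsf E[S_N]$ is also not a proof sketch: it mixes an almost-sure law-of-large-numbers statement into a bound on an expectation, and the claimed ``order $1/\ell$'' correlation is exactly the unproved decorrelation again.

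The lemma in fact requires no decorrelation at all; the paper's proof harvests the cancellation from the masses alone, using only the two deterministic inputs you correctly identified as ``easy.'' Coarse-grain $[1,N]$ into blocks of length $N^\tau$ with $\tfrac12<\tau<1$, and inside each block freeze the coefficient $\varphi(z)=f(z/N)\lgs{ p_z(Nt) }/m_z$ at the block's left endpoint $x$. The replacement error $A_N^{(2)}$ is bounded \emph{deterministically} by the uniform H\"older-$\tfrac12$ estimate \eqref{eq: Holder continuity} and the uniform continuity of $f$, since $|z-x|\le N^\tau$ with $\tau<1$; no probability is involved. For the frozen part $A_N^{(1)}$, the deterministic uniform bound $|\varphi(x)|\le\Const$ from \eqref{eq: bounded r and p} lets one discard the correlation between $\varphi(x)$ and the masses entirely:
\begin{equation*}
	\big|A_N^{(1)}\big| \;\le\; \frac{\Const}{N^{1-\tau}}\sum_{x\in\Gamma_N^\tau}\Big|\frac{1}{N^\tau}\sum_{x\le z<x'}\widetilde m_z\Big|,
\end{equation*}
and the block averages of the centered, bounded, i.i.d.\@ variables $\widetilde m_z$ satisfy a fourth-moment bound giving $\mathsf E\big((A_N^{(1)})^4\big)\le \Const N^{-2\tau}$ via Jensen, which is summable in $N$ for $\tau>\tfrac12$; Borel--Cantelli then yields almost-sure convergence along the full sequence, with no subsequence/continuity-in-$N$ argument needed. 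The second limit \eqref{eq: replacement bis} follows identically since squaring preserves both the uniform bound and the H\"older estimate. In short: your instinct that $(m_x-\overline m)$ produces the cancellation is right, but the way to exploit it is to bound the random coefficients by a deterministic constant and let the blocked i.i.d.\@ fluctuations of the masses do all the probabilistic work, rather than to seek correlation decay that the delocalized low modes do not provide.
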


\begin{proof}
Let us start with \eqref{eq: replacement}.
Let $A_N$ be the quantity in the left hand side of \eqref{eq: replacement},
let $\widetilde m_x = m_x - \overline m$, and let 
\begin{equation}\label{eq: some def of varphi}
	\varphi(x) =  f(x/N) \frac{\lgs{ p_x (Nt) }}{m_{x}}
\end{equation}
{where, for simplicity, we do not write explicitly the dependence of $\varphi$ on $N$ and $t$}.
Let $0 < \tau < 1$. 
{Let}
$$
	{\Gamma_N^\tau = \{ 1 + k \lfloor N^\tau \rfloor : k \in \Z \} \cap [1,N] } 
$$ 
{and, given $x\in \Gamma_N^\tau$, let }
$$
	{ x' = \min \{ y \in \Gamma_N^\tau \cup \{ N+1\} : y > x \} .} 
$$
We decompose $A_N$ as
\begin{align*}
	A_N
	=& {\frac{1}{N^{1 - \tau}} \sum_{x \in \Gamma_N^\tau} \frac{1}{N^\tau} \sum_{x \le z < x'} \varphi (z) \tilde m_z }\\
	=&{ \frac{1}{N^{1 - \tau}} \sum_{x \in \Gamma_N^\tau} \frac{\varphi (x)}{N^\tau} \sum_{x \le z < x'} \tilde m_z  
	 + \frac{1}{N^{1 - \tau}} \sum_{x \in \Gamma_N^\tau} \frac{1}{N^\tau} \sum_{x \le z < x'} (\varphi (z) - \varphi (x)) \tilde m_z }\\
	=: & A_N^{(1)} + A_N^{(2)}.
\end{align*}
To deal with $A_N^{(1)}$, we observe that $\varphi$ is bounded, see \eqref{eq: bounded r and p}, so that by Jensen's inequality, 
$$
	\mathsf E((A_N^{(1)})^4) 
	\le {\frac{\Const}{N^{1-\tau}} \sum_{x \in \Gamma_N^\tau} \mathsf E \bigg(\bigg( \frac{1}{N^\tau} \sum_{x\le z <x'} \widetilde m _{z}  \bigg)^4\bigg) }
	\le \frac{\Const}{N^{2 \tau}}
$$
{(since the masses $m_x$ are i.i.d.).} 
Taking $\tau > 1/2$, this shows that $A_N^{(1)} \to 0$ almost surely by Borel-Cantelli's lemma.
{To deal with $A_N^{(2)}$, we start from the definition \eqref{eq: some def of varphi} of $\varphi$ and we bound}
\begin{equation}\label{eq: bound varphi in proof lemma 2}
	{
	|\varphi (z) - \varphi(x)| 
	\le 
	\Const \left| m_z^{-1}\lgs{ p_z (Nt) } -  m_x^{-1}\lgs{ p_x (Nt) } \right|
	+ \Const |f(z/N) - f(x/N)|.
	}
\end{equation}
{The fact that $A_N^{(2)}\to 0$ (deterministically) follows then from the bound \eqref{eq: Holder continuity} and from the uniform continuity of $f$. }

{The proof of \eqref{eq: replacement bis} is entirely analogous, with now 
$\varphi (x) = f(x/N) (\lgs{ p_x (Nt) } / m_{x} )^2$ instead of \eqref{eq: some def of varphi}. 
By \eqref{eq: bounded r and p}, this function is bounded and \eqref{eq: bound varphi in proof lemma 2} is still satisfied since
\begin{align*}
	&\left| (m_z^{-1}\lgs{ p_z (Nt) })^2 -  (m_x^{-1}\lgs{ p_x (Nt) })^2 \right|\\
	&\le 
	\left| m_z^{-1}\lgs{ p_z (Nt) } +  m_x^{-1}\lgs{ p_x (Nt) } \right| \times \left| m_z^{-1}\lgs{ p_z (Nt) } -  m_x^{-1}\lgs{ p_x (Nt) } \right|\\
	&\le 
	\Const \left| m_z^{-1}\lgs{ p_z (Nt) } -  m_x^{-1}\lgs{ p_x (Nt) } \right|.
\end{align*}
}
\end{proof}

\subsection{Proof of  the convergence to the linear wave equation (\ref{R limit}-\ref{P limit})}\label{sec:proof-}
For any smooth functions $f, g:[0,1] \in \R$ such that $f(0)= f(1) = 0$, 
the limiting fields $\mathcal R$ and $\mathcal P$ defined in (\ref{eq:rpe}) 
can be equivalently characterized as follows: 
\begin{align}
	\mathcal R (f,t) &= \mathcal R (f,0) - \frac{1}{\overline m} \int_0^t \mathcal P (f',s) \dd s, 
	\label{eq: R characterization}\\
	\mathcal P (g,t) &= \mathcal P (g,0) - \int_0^t \mathcal R (g',s) \dd s,
	\label{eq: P characterization}
\end{align}
and 
\begin{equation}\label{eq: limiting fields t=0}
	\mathcal R (f,0) = \int_0^1 f(x) \mathsf r(x) \dd x, \qquad \mathcal P(g,0) = \int_0^1 g(x) \mathsf p (x) \dd x.
\end{equation}
Let us use this characterization to show that $\mathcal R_N (f,t) \to \mathcal R(f,t)$ 
and  $\mathcal P_N (g,t) \to \mathcal P(g,t)$.

The convergence at $t=0$ follows from the strong law of large numbers: $\mathcal R_N (f,0)$ and $\mathcal P_N (g,0)$ converge almost surely to $\mathcal R (f,0)$ and $ \mathcal P(g,0)$ given by \eqref{eq: limiting fields t=0}. 

Let us next consider $t\ge 0$, and let us first deal with $\mathcal R_N$. 
Integrating the equations of motion yields 
\begin{align*}
	\mathcal R_N(f,t) 
	&= \mathcal R_N (f ,0) + \int_0^{Nt}   \frac1{N}\sum_{x=1}^N f(x/N)\, \blgs{ \big( \nabla_+ M^{-1} p \big)_x (s)} \,\dd s \\
	&= \mathcal R_N (f, 0) - \int_0^{Nt}   \frac1{N}\sum_{x=1}^N \nabla_- f(x/N) m_x^{-1}  \lgs{ p_x (s) } \,\dd s
\end{align*}
where we used the boundary condition $f(0) = f (1) = 0$ to perform the integration by part. 
Since $\nabla_- f(x/N) = N^{-1} f'(x/N) + \mathcal O (N^{-2})$, we obtain
$$
	\mathcal R_N (f,t) =  \mathcal R_N (f, 0) - \int_0^{t}   \frac1{N}\sum_{x=1}^N f'(x/N) m_x^{-1}  \lgs{ p_x (Ns) } \dd s + \mathcal O \Big(\frac1N \Big).
$$
{Using \eqref{eq: replacement} in Lemma \ref{lem: replacement}, as well as the dominated convergence theorem to deal with the time integral, 
we may replace} $m_x^{-1}$ by $(\overline m)^{-1}$ up to an error that vanishes almost surely in the limit $N\to \infty$.
Thus 
\begin{equation}\label{eq: RN relation}
	\mathcal R_N (f,t) =  \mathcal R_N (f, 0) -  \frac{1}{\overline{m}}\int_0^{t} \mathcal P_N (f',s) \dd s + \varepsilon_N,
\end{equation}
where $\varepsilon_N \to 0$ almost surely as $N \to \infty$. 
Let us next deal with $\mathcal P_N$. This case is simpler since no homogenization over the masses is needed. Proceeding similarly, we find
\begin{align}
	\mathcal P_N(g,t)
	&= \mathcal P_N (g,0) + \int_0^{Nt} \frac1N \sum_{x=1}^N g(x/N)\, \blgs{\big(\nabla_- r\big)_x(s)}\, \dd s 
	\nonumber\\
	&= \mathcal P_N (g,0) - \int_0^{Nt} \frac1N \sum_{x=1}^N \nabla_+ g(x/N) \lgs{ r_x (s)}\, \dd s 
	\nonumber\\
	& = \mathcal P_N (g,0) - \int_0^t \mathcal R_N (g',s) \dd s + \tilde\varepsilon_N
	\label{eq: PN relation}
\end{align}
where we used the boundary condition $r_0(s) = r_N(s) = 0$ for all time $s\ge 0$ to perform the integration by part, and where $\tilde\varepsilon_N \to 0$ deterministically as $N\to \infty$. 

The families $\big( \mathcal R_N (f,\cdot)\big)_N$ and $\big( \mathcal P_N (g,\cdot)\big)_N$ are equicontinuous since a uniform bound on the time derivative of $\mathcal R_N(f,\cdot)$ and $ \mathcal P_N (g,\cdot)$ holds. 
Hence, the relations \eqref{eq: RN relation} and \eqref{eq: PN relation} implies that any limiting point must satisfy (\ref{eq: R characterization}-\ref{eq: P characterization}).

\subsection{Pointwise convergence}\label{sec:poit}
Thanks to the H\"older regularity
of both $\lgs{ r_{x}(Nt) }$ and $m_x^{-1}\lgs{ p_{x}(Nt) }$ expressed by \eqref{eq: Holder continuity}, 
we deduce a stronger result: 
\begin{Proposition}\label{prop: pointwise convergence}
Let $y\in ]0,1[$ and let $t\in ]0,1[$. 
As $N\to \infty$, almost surely (w.r.t.\@ $\mathsf P$),  
$$
	\lgs{ r_{[Ny]}(Nt) } \quad\to\quad \mathbf r(y,t), 
	\qquad 
	\frac{\lgs{ p_{[Ny]} (Nt) }}{m_{[Ny]}} \quad\to\quad \frac{\mathbf p(y,t)}{\overline{m}}.
$$
\end{Proposition}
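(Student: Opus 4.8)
The plan is to upgrade the weak convergence established in (\ref{R limit}-\ref{P limit}) to genuine pointwise convergence, the extra ingredient being the \emph{deterministic} equicontinuity supplied by the H\"older estimate \eqref{eq: Holder continuity}. Let me fix $t\in\,]0,1[$ and introduce the step profiles $y\mapsto \lgs{ r_{[Ny]}(Nt) }$ and $y\mapsto m_{[Ny]}^{-1}\lgs{ p_{[Ny]}(Nt) }$ on $[0,1]$, together with their piecewise linear interpolants $\widetilde r_N$ and $\widetilde p_N$ through the grid points $x/N$. By \eqref{eq: bounded r and p} these interpolants are uniformly bounded, and by \eqref{eq: Holder continuity} they are uniformly H\"older of exponent $1/2$ with a deterministic constant; in particular the families $(\widetilde r_N)_N$ and $(\widetilde p_N)_N$ are equicontinuous, so by the Arzel\`a--Ascoli theorem they are relatively compact in $\mathcal C^0([0,1])$ equipped with the uniform norm. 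Note that this precompactness holds for \emph{every} realization of the masses, since \eqref{eq: Holder continuity} and \eqref{eq: bounded r and p} are deterministic.

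Next I would fix a single full-measure event on which the limits can be identified. Let $\mathcal D$ be a countable dense subset of $\mathcal C^0([0,1])$. Since for each fixed $f$ the convergences (\ref{R limit}-\ref{P limit}) hold almost surely, intersecting over the countably many $f\in\mathcal D$ still yields a set of full $\mathsf P$-measure; on this set $\mathcal R_N(f,t)\to\mathcal R(f,t)$ and $\mathcal P_N(f,t)\to\mathcal P(f,t)$ for all $f\in\mathcal D$, and hence, using the uniform bound \eqref{eq: bounded r and p} to control the approximation, for all $f\in\mathcal C^0([0,1])$.

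It then remains to identify every subsequential limit. Working on the above event, extract any subsequence along which $\widetilde r_N$ converges uniformly to some $\bar r\in\mathcal C^0([0,1])$. Because $\mathcal R_N(f,t)$ is a Riemann sum for $\int_0^1 \widetilde r_N(y)\,f(y)\,\dd y$, the two differing by a term that vanishes as $N\to\infty$ (uniformly H\"older profiles tested against a uniformly continuous $f$), passing to the limit gives $\int_0^1 \bar r(y)\,f(y)\,\dd y=\mathcal R(f,t)=\int_0^1 \mathbf r(y,t)\,f(y)\,\dd y$ for all continuous $f$, whence $\bar r=\mathbf r(\cdot,t)$ by continuity. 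Since the subsequential limit is always the same, the whole family $\widetilde r_N$, and therefore the step profile $\lgs{ r_{[Ny]}(Nt) }$, converges to $\mathbf r(y,t)$, uniformly and in particular at the chosen $y$. For the momentum the only additional step is to identify the weak limit of $\widetilde p_N$: combining the convergence $\mathcal P_N(f,t)\to\mathcal P(f,t)$ with the replacement estimate \eqref{eq: replacement} of Lemma \ref{lem: replacement}, which lets us trade $\lgs{ p_x(Nt) }$ for $\overline m\, m_x^{-1}\lgs{ p_x(Nt) }$ in the limit, one finds $\tfrac1N\sum_x f(x/N)\,m_x^{-1}\lgs{ p_x(Nt) }\to \int_0^1 \big(\mathbf p(y,t)/\overline m\big) f(y)\,\dd y$; the Arzel\`a--Ascoli argument then applies verbatim and yields $m_{[Ny]}^{-1}\lgs{ p_{[Ny]}(Nt) }\to\mathbf p(y,t)/\overline m$.

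I expect the genuinely delicate point to be conceptual rather than computational: the deterministic H\"older regularity \eqref{eq: Holder continuity} is exactly what makes any pointwise statement possible, since weak convergence alone controls only integrals against test functions. Given that bound, the remaining care is bookkeeping --- arranging one almost-sure event valid simultaneously for a dense family of test functions, and routing the momentum through the replacement lemma so that the factor $m_x^{-1}$ is correctly absorbed into the limit $\mathbf p/\overline m$ rather than into $\mathbf p$ itself.
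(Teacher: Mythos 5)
Your proof is correct, and it reaches the conclusion by a different mechanism than the paper. The paper's own argument is a direct mollification: for fixed $y$ and a regularizing family $(\rho_\epsilon)$ it writes $\lgs{ r_{[Ny]}(Nt) }$ as $\int \rho_\epsilon(y-y')\lgs{ r_{[Ny']}(Nt) }\,\dd y'$ plus an error bounded by $\Const\sqrt{\epsilon}$ via the H\"older estimate \eqref{eq: Holder continuity}, recognizes the mollified term as $\mathcal R_N$ tested against the smooth, compactly supported function $\rho_\epsilon(y-\cdot)$, passes to the limit in $N$ using the weak convergence of Section \ref{sec:proof-} to obtain \eqref{eq: regularized integral}, and finally lets $\epsilon \to 0$ using the continuity of $\mathbf r(\cdot,t)$; the momentum field is handled identically after the replacement \eqref{eq: replacement}. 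You package the same two ingredients --- the deterministic H\"older bound and the weak convergence --- into an Arzel\`a--Ascoli compactness argument with identification of subsequential limits. Both are sound; your route yields the marginally stronger conclusion of uniform convergence of the interpolated profiles on $[0,1]$, while the paper's mollification is more local and quantitative (the error is explicitly $O(\sqrt\epsilon)$) and dispenses with the interpolation and subsequence extraction.

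One step of yours needs repair. You invoke (\ref{R limit}-\ref{P limit}) for a countable dense subset of $\mathcal C^0([0,1])$, but in the paper's logical order these limits for general continuous $f$ are derived \emph{after} the present proposition (by dominated convergence at the end of Section 3, using the proposition itself); what is available beforehand, from Section \ref{sec:proof-}, is convergence for smooth $f$ with $f(0)=f(1)=0$ (for $\mathcal R_N$) and smooth $g$ (for $\mathcal P_N$). As written, your argument is therefore circular, and the density extension you sketch does not run either: smooth functions vanishing at the endpoints are not sup-norm dense in $\mathcal C^0([0,1])$, so one cannot pass from that class to all of $\mathcal C^0([0,1])$ using only the uniform bound \eqref{eq: bounded r and p}. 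The fix is immediate and costs nothing: to identify a continuous subsequential limit $\bar r$ it suffices to test against a countable family of smooth functions compactly supported in $]0,1[$, which separates continuous functions on $]0,1[$; since $\bar r$ and $\mathbf r(\cdot,t)$ are both continuous, equality on $]0,1[$ extends to $[0,1]$, and your conclusion at interior points $y$ (all the proposition claims) follows. With that substitution your proof is complete and non-circular.
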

\begin{proof}
Let us first deal with $\lgs{ r_{[Ny]}(Nt) }$.
Let $(\rho_\epsilon)_{\epsilon > 0}$ be a regularizing family: 
$\rho_\epsilon \in \mathcal C^\infty (\R)$, 
$\supp (\rho_\epsilon) = [-\epsilon, \epsilon]$, $\rho_\epsilon \ge 0$ and $\int \rho_\epsilon (y) \, \dd y = 1$.
For $y\in ]\epsilon, 1-\epsilon[$, we decompose
\begin{equation*}
	\begin{split}
    	&\lgs{ r_{[Ny]} (Nt) } = \int \rho_\epsilon \Big( y-y' \Big) \lgs{ r_{[Ny]} (Nt) } \dd y'\\
	&=  \int \rho_\epsilon \Big( y-y' \Big) \lgs{ r_{[Ny']} (Nt) } \dd y' + \int \rho_\epsilon \Big( y-y' \Big) \left( \lgs{r_{[Ny]} (Nt)} -\lgs{ r_{[Ny']} (Nt) } \right)\dd y'.
  	\end{split}
\end{equation*}
By \eqref{eq: Holder continuity}, the second term is bounded in absolute value by
\begin{equation}	\label{eq: Holder rest}
  \int \rho_\epsilon \Big( y-y' \Big) \left| \lgs{r_{[Ny]} (Nt)} -\lgs{ r_{[Ny']} (Nt) } \right|\dd y' \ \le\
  \Const \sqrt\epsilon,
\end{equation}
while the first term is approximated uniformly in $N$ by
\begin{equation*}
 \frac{1}N \sum_{x=1}^N \rho_\epsilon \Big( \frac{x}N - y \Big) \lgs{ r_x (Nt) }.
\end{equation*}
Thus, by the result shown {in Section~\ref{sec:proof-}}, this term converges to 
\begin{equation}\label{eq: regularized integral}
	\int_0^1 \rho_\epsilon (y - y') \mathbf r(y',t) \dd y'
\end{equation}
as $N\to \infty$. 
Letting next $\epsilon\to 0$,  the continuity of $\mathbf r(\cdot , t)$ 
implies that \eqref{eq: regularized integral} converges to $\mathbf r(y,t)$ 
while \eqref{eq: Holder rest} converges to $0$ as $N\to \infty$. 
To deal with $m_{[Ny]}^{-1}\lgs{ p_{[Ny]} (Nt) }$, we proceed similarly, 
{using \eqref{eq: replacement} in} Lemma \ref{lem: replacement}, to get the analog of \eqref{eq: regularized integral}. 
\end{proof}
Finally, thanks to the bound \eqref{eq: bounded r and p} and the pointwise convergence result in Proposition \ref{prop: pointwise convergence}, 
and thanks to {using \eqref{eq: replacement} in} Lemma \ref{lem: replacement} for the field $\mathcal P_N$, 
we derive  (\ref{R limit}-\ref{P limit}) by applying the dominated convergence theorem.

\section{Evolution of the energy $\mathcal E_N$}\label{sec: energy}

In this section we show the limit \eqref{E limit}. We will assume that $f \in \mathcal C^1([0,1])$. 
We can then recover the result \eqref{E limit} for $f \in \mathcal C^0([0,1])$ by density, and using the a priori estimate $\sum_{x} \lgs{e_x (t)} \le \Const N$ at all time $t \ge 0$. 

\subsection{Main decomposition of the energy}\label{sec:textbfm-decomp-energ}
In order to derive the limit of $\mathcal E_N$, we separate the contribution to the total energy from the temperature (that does not evolve with time) 
and from mechanical energy, i.e.\@ the average kinetic and potential energy (that does evolve due to the transport of momentum and displacement). 

At the macroscopic level, we deduce from (\ref{eq:equa diff r}-\ref{eq:equa diff e}) that 
\begin{align}
	\mathbf e(y,t) 
	&= \mathbf e (y,0) + \frac{1}{\overline m} \int_0^t \partial_y  (\mathbf r(y,s) \mathbf p(y,s)) \dd s \nonumber\\
	&= \frac1{\beta(y)} + \frac{\mathbf p{^2} (y,0)}{2 \overline m} + \frac{\mathbf r{^2} (y,0)}{2} + \frac{1}{\overline m} \int_0^t \partial_s \Big( \mathbf p^2(y,s)/2 + \overline m \mathbf r{^2}(y,s)/2 \Big) \dd s \nonumber\\
	&= \frac{\mathbf p^2 (y,t)}{2 \overline m} + \frac{\mathbf r^2(y,t)}{2} + \frac1{\beta(y)}.\label{eq: evolution energy limit}
\end{align}
At the microscopic level, we decompose
\begin{align}
	\mathcal E_N(f,t)
	=& \frac1N \sum_{x=1}^N f(x/N) \left(  \frac{\lgs{ p_x^2 } }{2m_x} + \frac{\lgs{ r_x^2 }}{2}  \right) (Nt) 
	\nonumber\\
	=& \frac1N \sum_{x=1}^N  f(x/N) \left(  \frac{\lgs{ p_x}^2}{2m_x} + \frac{\lgs{ r_x }^2}{2}  \right) (Nt)
	+\frac1N \sum_{x=1}^N  f(x/N) \left(  \frac{\lgs{ \widetilde p_x^2 }}{2m_x} + \frac{\lgs{ \widetilde r_x^2 }}{2}  \right) (Nt) 
	\nonumber\\ 
	=:& \mathcal A_N(t) + \mathcal F_N(t), 
	\label{eq: average and fluctuation}
\end{align}
with 
$$ 
	\widetilde p_x = p_x - \lgs{p_x}, \qquad \widetilde r_x = r_x - \lgs{r_x},
$$
and where $\mathcal A$ and $\mathcal F$ stands respectively for ``average'' and ``fluctuations''. 
Comparing \eqref{eq: evolution energy limit} and \eqref{eq: average and fluctuation}, we conclude that it is enough to show that, $\mathsf P$ almost surely, as $N\to \infty$, 
\begin{align} 
	& \mathcal A_N(t) \quad \to \quad  \int_0^1 f (y) \left(  \frac{\mathbf p^2 (y,t)}{2 \overline m} + \frac{\mathbf r^2(y,t)}{2} \right) \dd y, 
	\label{eq: 1st limit energy}\\
	& \mathcal F_N (t) - \mathcal F_N (0) \quad \to \quad  0, \qquad \mathcal F_N (0) \quad \to \quad \int_0^1 \frac{f(y)}{\beta (y)} \dd y. 
	\label{eq: 2d limit energy}
\end{align}

The limit \eqref{eq: 1st limit energy} is deduced in the same ways as (\ref{R limit}-\ref{P limit}): 
{
We first use \eqref{eq: replacement bis} in Lemma \ref{lem: replacement} to replace $\lgs{p_x}^2/2m_x$ by $\frac{\overline m}{2}(\lgs{p_x}/m_x)^2$
up to an error that vanishes as $N\to \infty$. }
Next, thanks to the bound \eqref{eq: bounded r and p} and the pointwise convergence result in Proposition \ref{prop: pointwise convergence}, we derive \eqref{eq: 1st limit energy}  by applying the dominated convergence theorem. 

{
The limit \eqref{eq: 2d limit energy} express the fact that the profile of thermal energy (temperature) remains frozen in time. }
It will be established thanks to the localization of the high modes of the chain; 
this is the only place where localization is used. 
Moreover, we will show in Section \ref{subsec: thermal equilibrium} that in thermal equilibrium, 
the equality $\mathcal F_N (t) = \mathcal F_N (0)$ holds without any assumption on the distribution of the masses (besides positivity).
This shows thus that Theorem \ref{the: main result} holds actually also for a clean chain if $\beta$ is constant.  

{
  \begin{Remark}\label{frozenforever}
    The proof of \eqref{eq: 2d limit energy} holds for any larger time scale $N^\alpha t$, $\alpha \ge 1$, i.e. 
    \begin{align} 
	& \mathcal F_N (N^{\alpha -1} t) - \mathcal F_N (0) \quad \to \quad  0.
	\label{eq: 2d limit energy-s}
\end{align}
  \end{Remark}
}

\subsection{Convergence of $\mathcal F_N (t)$}
To deal with the limit \eqref{eq: 2d limit energy}, we will use the fact that any mode of the chain at positive energy is spatially localized in the thermodynamic limit. 
Hence, we will expand the solutions to the equations of motion into the eigenmodes of the chain. 
In Section \ref{sec: eigenmodes expansion} below, we carry this expansion in details and we deduce the needed localization estimates. 
For our present purposes, it suffices to know the following: 
There exists a basis $\{\psi^k \}_{0\le k \le N-1}$ of $\R^N$, the basis of the eigenmodes of the chain, so that the solutions to the equations of motion read
\begin{align}
&\widetilde r_x (t)   = \sum_{k=1}^{N-1} \Big(  \frac{\langle  \nabla_+ \psi^k, \widetilde r(0) \rangle}{\omega_k} \cos \omega_k t + \langle \psi^k , \widetilde p(0) \rangle \sin \omega_k t \Big)  \frac{(\nabla_+ \psi^k)_x}{\omega_k}, 
	\label{eq:r solution bis}\\
	& \widetilde p_x(t) = \sum_{k = 0}^{N-1} \Big( \langle \psi^k, \widetilde p(0) \rangle \cos \omega_k t  -  \frac{\langle \nabla_+ \psi^k ,\widetilde r(0) \rangle}{\omega_k} \sin \omega_k t \Big) (M \psi^k)_x,
	\label{eq:p solution bis}
\end{align}
where $\omega_0 = 0$  and $\omega_k >0$ for $1 \le k \le N$ are the corresponding eigenfrequencies of the chain
{(we assume that the $\omega_k$ are sorted by increasing order)}.
Observe that the first term starts from $k=1$ while the second one starts from $k=0$.
Moreover, the orthogonality relation $\langle \psi^k , M \psi^j \rangle = \delta (k - j)$ holds and $\{ \omega_k^{-1}\nabla_+ \psi^k  \}_{1 \le k \le N-1}$ forms an orthonormal basis of $(\R^{N-1}, \langle \cdot, \cdot \rangle_{N-1})$. 
See Section  \ref{subsec: eigenmodes solution} for more details. 
This representation is useful to exploit localization: all modes with $k\gtrsim \sqrt N$ are spatially localized. 
See Section \ref{subsec: localization} for more quantitative estimates. 
However, low modes with $k\lesssim \sqrt N$ remain extended, and we will have to show that the contribution of these modes vanish since their proportion $N^{1/2}/N \to 0$ in the thermodynamic limit. 
Below, for technical reasons, we will replace $1/2$ by $1-\alpha$, for some $\alpha > 0 $ that we will need to choose small enough.  

Let $0 < \alpha \ll 1$, let 
$$ 
	F_1 = \Z \cap [0,N^{1-\alpha}], \qquad F_{2} = \Z \cap ]N^{1-\alpha}, N-1],
$$
and let us decompose $\widetilde r(t) = \widetilde r^{(1)}(t) + \widetilde r^{(2)}(t)$ and $\widetilde p (t) = \widetilde p^{(1)} (t) + \widetilde p^{(2)}(t)$ with 
$$
	\widetilde r^{(i)}(t) = \sum_{k\in F_{i} \backslash \{0\}} (\dots), \qquad 
	\widetilde p^{(i)}(t) = \sum_{k\in F_{i}} (\dots),
$$
for $i=1,2$ and with $(...)$ the summand featuring in \eqref{eq:r solution bis} or \eqref{eq:p solution bis}.
We insert this decomposition in $\mathcal F_N$: 
$$ 
	\mathcal F_N (t) =  \frac1N \sum_{x=1}^N  f(x/N) \left(  
	\frac{\blgs{ \big( \widetilde p_x^{(1)} + \widetilde p_x^{(2)} \big)^2 }}{2m_x} 
	+ \frac{\blgs{ \big(\widetilde r_x^{(1)} +  \widetilde r_x^{(2)} \big)^2 } }{2}  \right) (Nt). 
$$
Let us show the two following limits: 
\begin{align}
	&\mathcal F^{(1)}_N (t) = \frac1N \sum_{x=1}^N  f (x/N) \left(  
	\frac{\blgs{ \big( \widetilde p_x^{(1)} \big)^2} }{2m_x} 
	+ \frac{\blgs{ \big(\widetilde r_x^{(1)} \big)^2 } }{2}  \right) (Nt)  \quad \to \quad  0, 
	\label{eq:F1}\\
	&\mathcal F^{(2)}_N (t) = \frac1N \sum_{x=1}^N  f(x/N) \left(  
	\frac{\blgs{ \big( \widetilde p_x^{(2)}  \big)^2} }{2m_x} 
	+ \frac{\blgs{ \big(\widetilde r_x^{(2)}  \big)^2 } }{2}  \right) (Nt)  \quad \to \quad  \int\frac{f(y)}{\beta (y)} \dd y,
	\label{eq:F2}
\end{align}
which, by Cauchy-Schwarz inequality, implies \eqref{eq: 2d limit energy}. 

Let us show \eqref{eq:F1}.
Let us bound $|f (x/N)| \leq \Const$, and use the explicit solution (\ref{eq:r solution bis}-\ref{eq:p solution bis}): 
\begin{align*}
	&\quad|\mathcal F^{(1)}_N (t)| \\
	&\le \frac\Const{2N}\sum_{x=1}^N \frac{1}{m_x}
	\BBlgs{\bigg( 
	\sum_{k\in F_1} \Big(   \langle \psi^k ,\widetilde p (0) \rangle  \cos (\omega_k Nt) - \tfrac{\langle \nabla_+ \psi^k,\widetilde r(0)\rangle}{\omega_k} \sin (\omega_k Nt ) \Big) 
	m_x \psi^k_x \bigg)^2} \\
	&+\frac\Const{2N}\sum_{x=1}^N
	\BBlgs{\bigg( 
	\sum_{k\in F_1\backslash \{0\}} \Big( \tfrac{\langle \nabla_+ \psi^k, \widetilde r(0)\rangle}{\omega_k} \cos (\omega_k Nt) + \langle \psi^k, \widetilde p (0) \rangle \sin (\omega_k Nt) \Big) 
	\frac{(\nabla_+ \psi^k)_x}{\omega_k}
	\bigg)^2}.
\end{align*}
In both terms, one may expand the square so as to get a double sum over $k,j\in F_1$ or $k,j\in F_1 \backslash \{0\}$, and insert the sum over $x$ inside the sum over $k,j$. 
This yields
$$ 
	\sum_{x=1}^N \frac{m_x^2}{m_x} \psi^j_x \psi^k_x  = \langle \psi^j, M\psi^k \rangle = \delta(k-j), \qquad 
	\sum_{x=1}^N \frac{(\nabla_+ \psi^j)_x (\nabla_+ \psi^k)_x}{\omega_j \omega_k} = \delta (k-j).
$$
Thus 
\begin{align*}
	|\mathcal F^{(1)}_N (t)| \le
	&\frac\Const{2N} \sum_{k \in F_1} \Blgs{ \Big(  \langle \psi^k ,\widetilde p (0) \rangle  \cos (\omega_k Nt) - \frac{\langle \nabla_+ \psi^k ,\widetilde r(0)\rangle}{\omega_k} \sin (\omega_k Nt)  \Big)^2 } \\
	&+\frac\Const{2N} \sum_{k \in F_1 \backslash \{0\}} \Blgs{ \Big(  \frac{\langle \nabla_+ \psi^k ,\widetilde r(0)\rangle}{\omega_k} \cos (\omega_k Nt) + \langle \psi^k, \widetilde p(0) \rangle \sin (\omega_k Nt)  \Big)^2}.
\end{align*}
At this point, it suffices to show that there exists a constant $\Const$ such that, for all $k \in F_1$, 
\begin{equation}
\label{eq:avant30}
\lgs{ \langle \psi^k,\widetilde p(0) \rangle^2 } \le \Const, \qquad \frac{\lgs{ \langle \nabla_+ \psi^k, \widetilde r(0) \rangle^2 }}{\omega^2_k} \le \Const, 
\end{equation}
since, bounding $\sin$ and $\cos$ by 1, and using Cauchy-Schwarz, we obtain 
$$
	|\mathcal F^{(1)}_N (t)| \le \frac{\Const}{N} \sum_{k \in F_1} 1 = \frac{\Const N^{1 - \alpha}}{N} \to 0.
$$
Let us deal with $\lgs{ \langle \psi^k,\tilde p(0) \rangle^2 }$ (the other case is analogous): 
\begin{equation*}
\begin{split}
\lgs{ \langle \psi^k , \tilde p (0) \rangle^2 }
	&= \Blgs{\Big( \sum_x \psi^k_x  \widetilde p_x (0)  \Big)^2}
	= \Blgs{ \sum_{x,y}   \psi^k_x  \psi^k_y \widetilde p_x (0) \widetilde p_y (0)  }\\
	&= \sum_x (\psi^k_x)^2  \lgs{ (\widetilde p_x (0))^2 }
\end{split}
\end{equation*}
where we have used the fact that $\lgs{\cdot}$ is a product measure and that $\lgs{\widetilde p_x(0)} = 0$
 for all $x\in \{1, \dots , N\}$. 
We compute $ \lgs{ (\widetilde p_x (0))^2 } = \frac{m_x}{\beta(x/N)}$. 
Since $\beta$ is positive and continuous on $[0,1]$, there exists $\beta_->0$
 such that $\beta (x/N) \ge \beta_-$ for all $x\in \{1, \dots , N\}$. 
Hence
\begin{equation}\label{eq: beta minus equation}
	\lgs{ \langle \psi^k, \widetilde p (0) \rangle^2 } \le \frac{1}{\beta_-} \sum_{x=1}^N m_x (\psi^k_x)^2 = \frac{1}{\beta_-} \langle \psi^k , M \psi^k \rangle=\frac{1}{\beta_-}. 
\end{equation}
Let us now show \eqref{eq:F2}. 
A computation using the initial measure shows that 
$\mathcal F(0) \to \int \frac{f(y)}{\beta (y)} \dd y$ as $N\to \infty$. 
Hence, thanks to \eqref{eq:F1}, it holds that $\mathcal F^{(2)}_N (0)
 \to  \int \frac{f(y)}{\beta (y)} \dd y$ as $N\to \infty$. 
Thus it suffices to show that $\mathcal F^{(2)}_N (t) - \mathcal F^{(2)}_N (0) \to 0$ as $N\to \infty$. 
Let us write $\mathcal F^{(2)}_N(t)$ as a scalar product and expand it in the eigenmodes basis: 
\begin{align*}
	\mathcal F^{(2)}_N(t) 
	=&  \frac{1}{2N}\blgs{ \langle (f \cdot \widetilde p^{(2)})(Nt), M^{-1} \widetilde p^{(2)} (Nt)\rangle  + \langle ( f \cdot \widetilde r^{(2)} )(Nt), \widetilde r^{(2)}(Nt) \rangle  }\\
	=& \frac{1}{2N} \sum_{k\in F_2}\Blgs{ \langle ( f \cdot \widetilde p^{(2)})(Nt), \psi^k \rangle \langle \psi^k, \widetilde p (Nt)\rangle \\
	&\phantom{ \frac{1}{2N} \sum_{k\in F_2}\langle\langle} +\frac{1}{\omega_k^2}  \langle (f \cdot \widetilde r^{(2)}) (Nt), \nabla_+ \psi^k \rangle \langle \nabla_+\psi^k ,  \widetilde r(Nt) \rangle} .
\end{align*}
Here $g\cdot h$ denotes a function on $\Z\cap [1,N]$ obtained by the usual multiplication in real space between a function $g$ on $[0,1]$ and $h$ on $\Z\cap [1,N]$, i.e.\@ $(g\cdot h)_x = g(x/N) h_x$.
By Lemma \ref{lem: localization} stated in Section \ref{sec: eigenmodes expansion} below, one may associate a localization center $x_0(k)$ to each mode $\psi^k$ with $k\in F_2$: 
$x_0(k)$ is the center of the interval $J(k)$ featuring there 
(assuming that $\alpha$ is small enough so that the hypotheses of Lemma \ref{lem: localization} are satisfied).
For each $k\in F_2$, let us decompose $f$ as 
$$
	f = f_0 (k) + \widetilde f_k \quad \text{with} \quad f_0 (k) = f (x_0(k)/N) 
$$
(thus $f_0(k)$ is a constant and $\widetilde f_k$ vanishes at $x_0 (k)/N$).
We insert this decomposition in the above expression for $\mathcal F_N^{(2)}(t)$: 
\begin{align}
	\mathcal F^{(2)}_N(t) 
	= &\frac{1}{2N} \sum_{k\in F_2} f_0 (k) \,\BBlgs{ \langle \widetilde p(Nt) , \psi^k \rangle^2 + \frac{\langle \widetilde r (Nt),\nabla_+ \psi^k \rangle^2}{\omega_k^2}   } \label{eq: F2 part 1} \\
	&+ \frac1{2N}\sum_{k\in F_2} \BBlgs{ \langle (\widetilde f_k \cdot \widetilde p^{(2)})(Nt), \psi^k \rangle \langle \psi^k, \widetilde p (Nt)\rangle \nonumber\\
	&\phantom{\frac1{2N}\sum_{k\in F_2} \Blgs{.}} +\frac{1}{\omega_k^2}  \langle (\widetilde f_k \cdot \widetilde r^{(2)}) (Nt), \nabla_+ \psi^k \rangle \langle \nabla_+\psi^k ,  \widetilde r(Nt) \rangle}. \label{eq: F2 part 2}
\end{align}
Each expression between $\lgs{\dots}$ in the sum in \eqref{eq: F2 part 1} represents the energy of the mode $\psi^k$ and does not evolve with time, 
see \eqref{eq: energy of the modes} in Section \ref{sec: eigenmodes expansion} below. 
Hence, to show $\mathcal F^{(2)}_N (t) - {\mathcal F}^{(2)}_N (0) \to 0$, we only need to show that the sum in \eqref{eq: F2 part 2} converges to 0 as $N\to \infty$.

Let us consider a single term in the sum \eqref{eq: F2 part 2}, and let us focus on the term involving $\widetilde p$ (the one involving $\widetilde r$ is treated the same way). 
First, by Cauchy-Schwarz, 
\begin{equation}\label{eq: to show for localization}
\blgs{\langle (\widetilde f_k \cdot\widetilde p^{(2)})(Nt), \psi^k \rangle \langle \psi^k, \widetilde p (Nt)\rangle }
\le \lgs{\langle (\widetilde f_k \cdot \widetilde p^{(2)})(Nt), \psi^k \rangle^{2}}^{1/2} \lgs{ \langle \psi^k, \widetilde p (Nt)\rangle^{2}}^{1/2}.
\end{equation}
The second factor in \eqref{eq: to show for localization} is bounded by a constant: 
\begin{align}
	 \lgs{ \langle \psi^k, \widetilde p (Nt)\rangle^{2}}
	 &= \BBlgs{ \Big( \langle \psi^k , \widetilde p(0) \rangle \cos \omega_k Nt - \frac{\langle \nabla_+ \psi^k , \widetilde r(0) \rangle}{\omega_k} \sin \omega_k Nt  \Big)^2 }  \nonumber\\
	 &\le 2 \,\BBlgs{  \langle \psi^k , \widetilde p(0) \rangle^2 + \frac{\langle \nabla_+ \psi^k , \widetilde r(0) \rangle^2}{\omega_k^2} }\, \le \Const,\label{eq: to show for localization next 2}
\end{align}
see \eqref{eq:avant30}.
For the first factor in \eqref{eq: to show for localization}, we use again Cauchy-Schwarz to get
\begin{equation}
\label{eq: to show for localization next}
\begin{split}
	\lgs{\langle (\widetilde f_k \cdot \widetilde p^{(2)})(Nt), \psi^k \rangle^{2}}
	&= \Blgs{\Big( \sum_x \widetilde f_k(x/N) \widetilde p^{(2)}_x (Nt) \psi^k_x \Big)^2}\\
	&\le \Big( \sum_x \widetilde f_k^2 (x/N) (\psi^k_x)^2 \Big) \,\Blgs{\sum_x \big( \widetilde p^{(2)}_x\big)^2 (Nt)}\, .
	\end{split}
\end{equation}
For $\widetilde f_k$, we have the bound 
$$
	|\widetilde f_k (x/N)| = |f_k (x/N) - f_k (x_0(k)/N) |\le \Const \,\frac{|x-x_0 (k) |}{N}
$$
(this is the only place where we use $f \in \mathcal C^1([0,1])$).
Hence, form Lemma \ref{lem: localization} below, we deduce that for any $\epsilon > 0$, the first factor in the right hand side of \eqref{eq: to show for localization next} can be bounded by $1/N^{2-\epsilon}$ by taking $\alpha > 0$ small enough.
From the conservation of energy (see (\ref{eq: energy of the modes}) and the bound (\ref{eq:avant30})) , the second factor in \eqref{eq: to show for localization next} is $\mathcal O (N)$.
Hence, for $\alpha>0$ small enough, \eqref{eq: to show for localization next} goes to zero as $N\to \infty$. 
Combining this with \eqref{eq: to show for localization next 2}, we find that  \eqref{eq: to show for localization} goes to zero as $N \to \infty$, and hence that \eqref{eq: F2 part 2} converges to 0 as $N\to \infty$.

\subsection{Thermal equilibrium case}\label{subsec: thermal equilibrium}

Assume here that
 there exists some $\overline{\beta} >0$ such that $\beta (y) = \overline{\beta}$ for all $y \in [0,1]$. 
Then, we may relax the assumptions on the masses: requiring only that they are all strictly positive, let us show that $\mathcal F_N (t) = F_N (0)$ for all $t \ge 0$. 
This results from an exact computation. 

Since $f$ is arbitrary, it is necessary and sufficient to prove that, for any $x$, 
$$ 
	\frac{\dd }{ \dd t} \left(  \frac{\lgs{\tilde p_x^2(t)}}{2 m_x} + \frac{\lgs{\tilde r_x^2(t)}}{2} \right) = 0. 
$$
We compute 
\begin{multline*}
	\frac{\tilde p_x^2(t)}{2 m_x} 
	=\frac12 \sum_{j,k} \Big( \langle \psi^j , \tilde p(0) \rangle \cos \omega_j t - \frac{\langle \nabla_+ \psi^j , \tilde r(0)\rangle}{\omega_j} \sin \omega_j t  \Big)
	 {\,\times} \\
	 \Big( \langle \psi^k, \tilde p(0) \rangle \cos \omega_k t - \frac{\langle \nabla_+ \psi^k , \tilde r(0)\rangle}{\omega_j} \sin \omega_k t  \Big)  m_x \psi^j_x \psi^k_x.
\end{multline*}
A similar expression holds for $\tilde r^{2}(t)/2$. 
In order to obtain the expectation with respect to $\lgs{\cdot}$, we compute
\begin{eqnarray}
		\blgs{\langle \psi^k , \tilde p (0) \rangle \langle \psi^j , \tilde p (0) \rangle} & = & \frac{\delta (k-j)}{\overline{\beta}}, \label{cancellation beta const 1} \\
		\blgs{\langle \psi^k , \tilde p (0) \rangle \langle \nabla_+ \psi^j , \tilde r (0) \rangle} &  = & 0, \label{cancellation beta const 2} \\
		\blgs{\langle \nabla_+ \psi^k , \tilde r (0) \rangle \langle \nabla_+ \psi^j , \tilde r (0) \rangle} &  = & \frac{\omega_k^2 \delta (k-j)}{\overline\beta}. \label{cancellation beta const 3}
\end{eqnarray}
These three properties result from the fact the product structure of $\rho_{\mathrm{loc}}$, from the fact that the variables $\tilde p$ and $\tilde r$ are centered, 
and from the the fact that $\beta$ is constant for  \eqref{cancellation beta const 1} and \eqref{cancellation beta const 3}.
E.g.\@ \eqref{cancellation beta const 1} is obtained by 
\begin{equation*}
\begin{split}
\blgs{\langle \psi^k , \tilde p (0) \rangle \langle \psi^j , \tilde p (0) \rangle} 
	&= \sum_{x,y}\psi^k_x  \psi^j_y \lgs{ \tilde p_x (0) \tilde p_y (0)} = \frac{1}{\beta} \sum_x m_x \psi^k_x \psi^j_x \\
	&= \frac{\delta (k-j)}{\overline \beta}.
\end{split}
\end{equation*}
Hence we have that
$$
	 \frac{\lgs{\tilde p_x^2(t)}}{2 m_x} = \frac{1}{2\beta} \sum_k( \cos^2 \omega_kt + \sin^2 \omega_k t ) m_x (\psi^k_x)^2 = \frac{1}{2\beta}
$$
and similarly $\lgs{\tilde r_x^2(t)}/2 = \frac1{2\beta}$.

\section{Eigenmodes expansion: integrability, localization}\label{sec: eigenmodes expansion}

We describe an explicit solution to the equations of motion \eqref{eq: equations of motion} in terms of the eigenmodes of the system. 
This representation is useful to establish the integrability of the system and to exploit the localization at all energies above the ground states (in the thermodynamic limit).  

\subsection{Solution to the equations of motion}\label{subsec: eigenmodes solution} 
From \eqref{eq: equations of motion}, one can deduce second order equations for $r$ and $p$ separately:
$$ 
	 \ddot r_x = \big(\nabla_+ M^{-1} \nabla_- r\big)_x \quad (1 \le x \le N-1), \qquad \ddot p_x = \big( \Delta M^{-1}p\big)_x \quad (1 \le x \le N),
$$ 
where, besides the boundary conditions $r_0 = r_N = 0$, we have assumed free boundary conditions for $M^{-1}p$, i.e. $m_0^{-1}p_0 = m_1^{-1}p_1$ and $m_{N+1}^{-1}p_{N+1} = m_N^{-1}p_{N}$. 
Notice that there are two different vector spaces: a $(N-1)$-dimensional space for $r$ with fixed boundary conditions, and a $N$-dimensional space for $M^{-1}p$ with free boundary conditions. 
Moreover, we observe that $\nabla_+ = - (\nabla_-)^\dagger$ with fixed boundary conditions, and that $\Delta = \Delta^\dagger$ with free boundary conditions. 

In order to solve the equations of motion, we need to diagonalize two matrices: 
$\big(\nabla_+ M^{-1} \nabla_-\big)^\dagger=\nabla_+ M^{-1} \nabla_-$ (of size $N-1$) 
and $\big(\Delta M^{-1} \big)^\dagger=M^{-1}\Delta$ (of size $N$). 
{Let us start with the latter:}
This matrix is not symmetric but the matrix $ M^{-1/2} (-\Delta) M^{-1/2} $ is symmetric and non-negative.
It admits thus an orthonormal set of eigenvectors, $\{\varphi^k\}_{0 \le k \le N-1}$ and corresponding eigenvalues $\omega^2_k$
{that we assume to be sorted by increasing order}. 
Moreover, the spectrum is $\mathsf P$-almost surely non-degenerate 
(see e.g.\@ Proposition II.1 in \cite{kunz_souillard}, considering here a perturbation around the non-degenerate equal masses case). 
Therefore the vectors $\psi^k = M^{-1/2} \varphi^k$ are such that 
\begin{equation}\label{eq: eigenmodes equation}
	M^{-1} (-\Delta) \psi^k = \omega^2_k  \psi^k, \qquad \langle \psi^j, M \psi^k \rangle = \delta(j-k).
\end{equation}
Because of free boundary conditions, $\omega^2_0 = 0$, and {one may chose $\psi_0$ to be given by}
$$
	\psi_0 = \Big(\sum_{x}m_x\Big)^{-1/2} (1, \dots , 1)^\dagger .   
$$
{Next}, the matrix ${-} \nabla_+ M^{-1} \nabla_- $ is symmetric and non-negative, 
and we denote the eigenvectors by $|\widetilde\psi^k \rangle$ for $1 \le k \le N-1$.
It is readily checked that {they may be chosen to be given by}
$$
	\widetilde \psi^k = \frac{1}{\omega_k} \nabla_+ \psi^k
$$
with the corresponding eigenvalue given by $\omega_k$ for $1 \le k \le N-1$. 
We observe that, by the free boundary conditions on $\psi^k$,  $\widetilde \psi^k (0) = \widetilde \psi^k (N) = 0$. 
 
Given initial conditions $r(0),p(0)$, we can write an explicit solution for $r(t),p(t)$: 
$$ 
	\langle \widetilde \psi^k , \ddot{r} \rangle = -\omega^2_k \langle \widetilde \psi^k, r\rangle \quad (1 \le k \le N-1), 
	\qquad 
	\langle \psi^k, \ddot p \rangle = -\omega^2_k \langle \psi^k, p \rangle \quad (0 \le k \le N-1).
$$
Thus
\begin{align*}
	&\langle \widetilde\psi^k, r(t) \rangle = \langle \widetilde \psi^k, r(0) \rangle \cos \omega_k t + \frac{\langle \widetilde\psi^k, \nabla_+ M^{-1}p(0) \rangle}{\omega_k} \sin \omega_k t \quad (1 \le k \le N- 1),\\
	&\langle \psi^k ,p(t) \rangle = \langle \psi^k ,p(0) \rangle \cos \omega_k t + \frac{\langle \nabla_- r(0),\psi^k \rangle}{\omega_k} \sin \omega_k t \quad (0 \le k \le N-1)
\end{align*}
with the convention $\frac{\sin 0}{0} = 1$ in the second expression at $k=0$ (notice that $\langle \nabla_- r(0) ,\psi^k \rangle = - \langle r(0) , \nabla_+\psi^k \rangle = 0$ for $k=0$). This yields therefore
\begin{align}
	&r(t) = \sum_{k=1}^{N-1} \Big(  \frac{\langle  \nabla_+ \psi^k ,r(0) \rangle}{\omega_k} \cos \omega_k t + \langle \psi^k , p(0) \rangle \sin \omega_k t \Big)  \frac{\nabla_+ \psi^k}{\omega_k}, 
	\label{eq:r solution}\\
	&p(t) = \sum_{k = 0}^{N-1} \Big( \langle \psi^k ,p(0) \rangle \cos \omega_k t  -  \frac{\langle \nabla_+ \psi^k ,r(0) \rangle}{\omega_k} \sin \omega_k t \Big) M \psi^k.
	\label{eq:p solution}
\end{align}

\subsection{Full set of invariant quantities}\label{subsec: invariant quantities} 
We observe that the dynamics has $N$ invariant quantities, corresponding to the energy of each mode. 
It is thus an integrable system. 
Indeed, let us write the full energy as 
\begin{align*}
	H &= \frac12 \big( \langle p, M^{-1}p \rangle + \langle r,r \rangle \big)
	= \frac12 \sum_{k=0}^{N-1} \langle p,\psi^k \rangle \langle M\psi^k, M^{-1}p \rangle + \frac12 \sum_{k=1}^{N-1} \langle r, \widetilde \psi^k \rangle \langle \widetilde \psi^k, r \rangle \\
	&= \frac12 \sum_{k=0}^{N-1} \Big(  \langle p,\psi^k \rangle^2 + \frac{\langle r ,\nabla_+ \psi^k \rangle^2}{\omega_k^2} \Big)
\end{align*}
with the convention that the second term in the last sum is $0$ at $k=0$.
From the evolution equation of the dynamics, one gets that actually 
\begin{equation}
\label{eq: energy of the modes}
	\frac{\dd }{ \dd t} \left(  \langle p ,\psi^k \rangle^2 + \frac{\langle r, \nabla_+ \psi^k \rangle^2}{\omega_k^2} \right) = 0 \quad\text{for all} \quad 0 \le k \le N-1. 
\end{equation}

Moreover, by taking specific linear combinations of these conserved quantities, one can obtain conserved quantities that can be written as a sum of local terms. 
This is for instance the case of the quantity $I$ defined in \eqref{eq: I conserved}, that reads also 
$$
	I(r,p) = \frac12 \sum_{k=1}^{N-1} \omega_k^2 \, \left(  \langle p,\psi^k \rangle^2 + \frac{\langle r,\nabla_+ \psi^k \rangle^2}{\omega_k^2} \right).
$$

\subsection{Localization}\label{subsec: localization} 
Localization can be expressed mathematically in the following strong sense, 
see \cite{kunz_souillard,aizenman_graff,aizenman_molchanov} for the general theory and \cite{verheggen,ajanki_huveneers} for precise estimates on the localization length as one approaches the ground state. 
Let $0 < \alpha < \frac12$ and let $I(\alpha) = ]N^{(1- \alpha)},N-1]\cap \Z$. 
There exist constants $\Const, \const >0$ such that 
$$
	\mathsf E \Big( \sum_{k \in I(\alpha)} |\psi^k_x \psi^k_y | \Big) \le \Const \ed^{-c|x-y|/\zeta(\alpha)} \quad \text{with} \quad \zeta(\gamma) = N^{2\alpha}.
$$
We will use this estimate to show that every mode in $k \in I(\alpha)$ is supported in a small subset of $[1,N] \cap \Z$ up to a small error: 

\begin{Lemma}\label{lem: localization}
Let $\alpha, \gamma>0$ be such that $2 \alpha < \gamma < 1$.
There exists almost surely $N_0 \in\N$ so that for all $N \ge N_0$, and for all $k \in I (\alpha)$, 
there exists an interval $J(k)$ with $|J(k)| \le 2N^\gamma$ such that $|\psi^k_x | \le N^{-1/\gamma}$ for all $x \notin J(k)\cap \Z$. 
\end{Lemma}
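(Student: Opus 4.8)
The plan is to turn the expectation bound on $\sum_{k\in I(\alpha)}|\psi^k_x\psi^k_y|$ stated just above the lemma into an almost sure, simultaneous statement about every individual mode, via a first moment estimate and Borel--Cantelli. First I would attach to each mode a localization center: from the normalization $\langle\psi^k,M\psi^k\rangle=1$ in \eqref{eq: eigenmodes equation} and the bound $m_x\le m_+$ one gets $\sum_x(\psi^k_x)^2\ge m_+^{-1}$, so, averaging over the at most $N$ sites, there is some $x_0(k)$ with $|\psi^k_{x_0(k)}|\ge (m_+N)^{-1/2}$. I then take $J(k)$ to be the interval of length $2N^\gamma$ centered at $x_0(k)$, which automatically satisfies $|J(k)|\le 2N^\gamma$.

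The key observation is that, since $k\in I(\alpha)$ and all terms are non-negative, the single mode is dominated pointwise by the full sum:
\[
	|\psi^k_x|\,|\psi^k_{x_0(k)}|\;\le\;\sum_{j\in I(\alpha)}|\psi^j_x\psi^j_{x_0(k)}|\;=:\;S(x,x_0(k)),
\]
whence $|\psi^k_x|\le\sqrt{m_+N}\,S(x,x_0(k))$. It therefore suffices to control $S(x,y)$ uniformly over all pairs with $|x-y|>N^\gamma$. I would set the threshold $\delta_N=(m_+N)^{-1/2}N^{-1/\gamma}$, so that $\sqrt{m_+N}\,\delta_N=N^{-1/\gamma}$, and let $A_N$ be the event that $S(x,y)>\delta_N$ for some pair with $|x-y|>N^\gamma$. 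For such pairs the localization estimate gives $\mathsf E(S(x,y))\le\Const\,\ed^{-\const N^{\gamma-2\alpha}}$ (using $|x-y|>N^\gamma$ and $\zeta=N^{2\alpha}$), so Markov's inequality and a union bound over the at most $N^2$ far-apart pairs yield
\[
	\mathsf P(A_N)\;\le\;\frac{N^2\,\Const\,\ed^{-\const N^{\gamma-2\alpha}}}{\delta_N}\;\le\;\Const\,N^{5/2+1/\gamma}\,\ed^{-\const N^{\gamma-2\alpha}}.
\]
Because $\gamma>2\alpha$, the stretched-exponential factor dominates the polynomial prefactor, so $\sum_N\mathsf P(A_N)<\infty$ and Borel--Cantelli provides an almost surely finite $N_0$ beyond which $A_N$ fails. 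On that event, for every $k\in I(\alpha)$ and every integer $x\notin J(k)$ one has $|x-x_0(k)|>N^\gamma$, hence $S(x,x_0(k))\le\delta_N$ and $|\psi^k_x|\le\sqrt{m_+N}\,\delta_N=N^{-1/\gamma}$, which is exactly the claim.

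The main obstacle is the randomness of the localization center: one cannot simply substitute $y=x_0(k)$ into the expectation bound, since $x_0(k)$ depends on the masses and on $k$. This is what forces the uniform control over all $\sim N^2$ far-apart pairs rather than a single pointwise estimate, and it is also why the pointwise domination by $S(x,y)$ is essential — it collapses the control of all modes $k\in I(\alpha)$ into a single quantity. The argument closes only because the decay length $\zeta(\alpha)=N^{2\alpha}$ is genuinely shorter than $N^\gamma$, i.e.\@ $\gamma>2\alpha$, which makes the first moment super-polynomially small and thereby absorbs both the union bound and the polynomial loss $\sqrt{m_+N}$ coming from the lower bound on $|\psi^k_{x_0(k)}|$.
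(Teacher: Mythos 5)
Your proof is correct, and it runs on the same probabilistic engine as the paper's: the disorder-averaged input $\mathsf E\big(\sum_{k\in I(\alpha)}|\psi^k_x\psi^k_y|\big)\le \Const\,\ed^{-\const |x-y|/N^{2\alpha}}$, Markov's inequality, a union bound over the $O(N^2)$ pairs with $|x-y|>N^\gamma$, and Borel--Cantelli, with all polynomial losses absorbed by the stretched exponential $\ed^{-\const N^{\gamma-2\alpha}}$ precisely because $\gamma>2\alpha$. Where you genuinely deviate is in how the interval $J(k)$ is produced. The paper bounds the probability of the symmetric event that some mode $k\in I(\alpha)$ carries two distant peaks, i.e.\@ sites $x,y$ with $|x-y|\ge N^\gamma$ and $|\psi^k_x|,|\psi^k_y|\ge N^{-1/\gamma}$, via $\mathsf P\big(|\psi^k_x\psi^k_y|\ge N^{-2/\gamma}\big)\le N^{2/\gamma}\,\mathsf E\big(|\psi^k_x\psi^k_y|\big)$ and a union bound over triples $(k,x,y)$; on the complement, the set $\{x: |\psi^k_x|>N^{-1/\gamma}\}$ has diameter at most $N^\gamma$ and hence fits in an interval of length $\le 2N^\gamma$ --- the center of $J(k)$ is left implicit. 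You instead manufacture an explicit center $x_0(k)$ by pigeonhole from the normalization $\langle\psi^k,M\psi^k\rangle=1$ (this uses the mass upper bound $m_+$, harmless under the paper's assumptions) and control distant amplitudes by the ratio trick $|\psi^k_x|\le S(x,x_0(k))/|\psi^k_{x_0(k)}|\le\sqrt{m_+N}\,S(x,x_0(k))$, applying Markov to $S$ at the smaller threshold $\delta_N=(m_+N)^{-1/2}N^{-1/\gamma}$; working with $S$ also folds the sum over $k$ into a single quantity so your union bound is over pairs only. This costs an extra harmless factor $N^{1/2}$ in the probability estimate, and your insistence on uniform control over all far-apart pairs --- rather than plugging the random point $x_0(k)$ into an expectation bound --- is exactly the right fix for the one pitfall of this route. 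What your version buys is a concrete localization center with the quantitative bound $|\psi^k_{x_0(k)}|\ge(m_+N)^{-1/2}$, which matches in spirit how the main text later uses $x_0(k)$ (there defined a posteriori as the center of $J(k)$); the paper's version is marginally leaner, needing neither the pigeonhole step nor the $\sqrt{N}$ loss, and making no use of $m_+$.
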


\begin{proof}
Let us first show that 
\begin{align}
	P &:= \mathsf P\big( \exists k \in I(\alpha), \exists x,y\in [1,N]\cap \Z : |x-y| \ge N^\gamma, |\psi^k_x | \ge N^{-1/\gamma},  |\psi^k_y | \ge N^{-1/\gamma}   \big)  
	\nonumber\\
	&\le\; \Const(\alpha, \gamma) \ed^{- N^{(\gamma - 2\alpha)/2}}
	\label{eq: proba 0 event localization}
\end{align}
Indeed we compute
\begin{align*}
	P &\le \sum_{k\in I(\alpha)} \sum_{x,y : |x-y| \ge N^\gamma} \mathsf P \big(  |\psi^k_x | \ge N^{-1/\gamma},  |\psi^k_y | \ge N^{-1/\gamma} \big) \\
	&\le \sum_{k\in I(\alpha)}  \sum_{x,y : |x-y| \ge N^\gamma} \mathsf P \big(  |\psi^k_x\psi^k_y | \ge N^{-2/\gamma} \big) \\
	&\le \sum_{k\in I(\alpha)}  \sum_{x,y : |x-y| \ge N^\gamma} N^{2/\gamma} \mathsf E ( |\psi^k_x\psi^k_y |) \\
	&\le \Const N^{2/\gamma} \sum_{x,y : |x-y| \ge N^\gamma} \ed^{-c|x-y|/\zeta(\alpha)} 
	\le \Const N^{2/\gamma} N^{2} \ed^{- N^{\gamma - 2 \alpha}} 
	\le  \Const(\alpha, \gamma) \ed^{- N^{(\gamma - 2\alpha)/2}}.
\end{align*}
Therefore, there exists almost surely $N_0$ so that for for all $N\ge N_0$, the event featuring in \eqref{eq: proba 0 event localization} does not occur. 
Hence in this case, for all $k \in I(\alpha)$ and all $|x - y|>N^{\gamma}$, we must have either $|\psi^k_x | \le 1/N^{1/\gamma}$ or $|\psi^k_y | \le 1/N^{1/\gamma}$.
This means thus that for any $k\in I(\alpha)$  there exists an interval $J(k)$ with $|J(k)| \le 2N^\gamma$ such that $|\psi^k_x | \le N^{-1/\gamma}$ for all $x \notin J(k)\cap \Z$. 
\end{proof}

\end{document}